\tikzset{
    partial ellipse/.style args={#1:#2:#3}{
        insert path={+ (#1:#3) arc (#1:#2:#3)}
    }
}
\newtheorem{theorem}{Theorem}[section]
\newtheorem{principle}[theorem]{Principle}
\newtheorem{lemma}[theorem]{Lemma}
\newtheorem{corollary}[theorem]{Corollary}
\newtheorem{proposition}[theorem]{Proposition}
\newtheorem{claim}[theorem]{Claim}
\newtheorem*{claim*}{Claim}
\theoremstyle{definition}
\newtheorem{definition}[theorem]{Definition}
\theoremstyle{remark}
\newtheorem{remark}[theorem]{Remark}
\newcommand{\dist}{\mathrm{dist}}
\newcommand{\ee}{\mathbf{e}}
\newcommand{\eps}{\varepsilon}
\newcommand{\Hh}{\mathcal{H}}
\newcommand{\reg}{\operatorname{reg}}
\newcommand{\sing}{\operatorname{sing}}
\newcommand{\spt}{\operatorname{spt}}
  \newcommand{\Div}{\operatorname{Div}}
   \newcommand{\Cc}{\mathcal C}
    \newcommand{\Tan}{\operatorname{Tan}}
\newcommand{\RR}{\mathbb{R}}
\begin{document}

\title{Moving plane method for varifolds and applications}

\author{Robert Haslhofer, Or Hershkovits, Brian White}
\dedicatory{Dedicated to the memory of Professor Louis Nirenberg}

\begin{abstract} In this paper, we introduce a version of the moving plane method that applies to potentially quite singular hypersurfaces, generalizing the classical moving plane method for smooth hypersurfaces. Loosely speaking, our version for varifolds shows that smoothness and symmetry at infinity (respectively at the boundary) can be promoted to smoothness and symmetry in the interior. The key feature, in contrast with the classical 
formulation of the moving plane principle, is that smoothness is a conclusion rather than an assumption.

We implement our moving plane method in the setting of compactly supported varifolds with smooth boundary and in the setting of  varifolds without boundary.  A key ingredient is a Hopf lemma for stationary varifolds and  varifolds
of constant mean curvature. Our Hopf lemma provides a  new tool to establish smoothness of varifolds, and works in arbitrary dimensions and without any stability assumptions. 
As applications of our new moving plane method, we prove 
varifold uniqueness results for the catenoid, spherical caps, and Delaunay surfaces that are inspired by
 classical uniqueness results by Schoen, Alexandrov, Meeks and Korevaar-Kusner-Solomon. We also prove a varifold version of Alexandrov's Theorem
  for compactly supported varifolds of constant mean curvature in hyperbolic space.  
\end{abstract}

\maketitle

\section{Introduction}

The moving plane method is a fundamental tool to establish symmetry in geometry and partial differential equations. 
This method was pioneered by Alexandrov \cite{Alexandrov}, Serrin \cite{Serrin}, Gidas-Ni-Nirenberg \cite{GidasNiNirenberg}, Berestycki-Nirenberg \cite{BerestyckiNirenberg}, and Schoen \cite{Schoen}. Since then, the method has been applied frequently in the literature.  For surveys and comprehensive references see e.g. Brezis \cite{Brezis} and Ciraolo-Roncoroni \cite{CiraoloRoncoroni}. Loosely speaking, the method gives rise to the following general principle:
 
\begin{principle}[Classical moving plane principle] Symmetry at infinity (or at the boundary) can be promoted to symmetry in the interior.
\end{principle}

In our recent joint work with Choi, we discovered a new variant of the moving plane principle, which can be loosely speaking stated as follows:

\begin{principle}[{New moving plane principle \cite{CHHW}}]\label{principle_new}
Smoothness and symmetry at infinity (or at the boundary) can be promoted to smoothness and symmetry in the interior.
\end{principle}

\begin{figure}
\begin{tikzpicture}[x=1cm,y=1cm] \clip(-8,12) rectangle (8,-1);
\shade[left color=blue!50!,right color=blue!10!] (4*1,-4*1) rectangle (4*1.2,4*4);
\draw [samples=100,rotate around={0:(0,0)},xshift=0cm,yshift=0cm,domain=4*0.8245253419765505:8)] plot (\x,{(\x)^2/4});
\draw [dashed] (4*0.305,4*0.25) [partial ellipse=0:180:4*0.18cm and 4*0.03cm];
\draw (4*0.305,4*0.25) [partial ellipse=180:360:4*0.18cm and 4*0.03cm];
\draw [dashed,rotate around={120:(-4*0.64,4*0.35)}] (-4*0.64,4*0.35) [partial ellipse=180:360:4*0.06cm and 4*0.01cm];
\draw [rotate around={120:(-4*0.64,4*0.35)}] (-4*0.64,4*0.35) [partial ellipse=0:180:4*0.06cm and 4*0.01cm];
\draw [dotted,thick,rotate around={240:(4*0.82,4*0.64)}] (4*0.82,4*0.64) ellipse (4*0.024cm and 4*0.004cm);
\draw [dashed] (-4*0.22,4*0.22) [partial ellipse=0:180:4*0.21cm and 4*0.035cm];
\draw (-4*0.22,4*0.22) [partial ellipse=180:360:4*0.21cm and 4*0.035cm];
\draw [dashed] (4*0.0002332491688299718,4*1.5997785993677849) [partial ellipse=0:180:4*1.2571684499174602cm and 4*0.21455991555378162cm];
\draw (4*0.0002332491688299718,4*1.5997785993677849) [partial ellipse=180:360:4*1.2571684499174602cm and 4*0.21455991555378162cm];
\draw [dashed] (4*0.00032078487570581966,4*2.1966671075002657) [partial ellipse=0:180:4*1.4743165895739172cm and 4*0.29007806481583676cm];
\draw (4*0.00032078487570581966,4*2.1966671075002657) [partial ellipse=180:360:4*1.4743165895739172cm and 4*0.29007806481583676cm];
\draw   (4*0.8245253419765505,4*0.6798420395615475)-- (4*0.95,4*0.6);
\draw   (4*0.7819932155552015,4*0.6115133891743638)-- (4*0.95,4*0.6);
\draw   (4*0.648407203918607,4*0.4204319020935459)-- (4*0.6423844814564859,4*0.37711740705108604);
\draw [samples=100,rotate around={0:(0,0)},xshift=0cm,yshift=0cm,domain=-8:-4*0.6408712864207166)] plot (\x,{(\x)^2/4});
\draw   (-4*0.3646156662354182,4*0.08302374114334066)-- (-4*0.31653544754347496,4*0.10019468955154799);
\draw   (-4*0.6408712864207166,4*0.4107160057585441)-- (-4*1,4*0.3);
\draw   (-4*1,4*0.3)-- (-4*0.53991811338522,4*0.29151156916145526);
\draw [samples=100,rotate around={0:(0,0)},xshift=0cm,yshift=0cm,domain=-4*0.31653544754347496:-4*0.20622200492137188)] plot (\x,{(\x)^2/4});
\draw [shift={(-4*0.7027681841100468,-4*0.022034854577463753)}]  plot[domain=0.3:1.0917643671335866,variable=\t]({4*1*0.3533150228543174*cos(\t r)+4*0*0.3533150228543174*sin(\t r)},{4*0*0.3533150228543174*cos(\t r)+4*1*0.3533150228543174*sin(\t r)});
\draw   (-4*0.20622200492137188,4*0.04252751531379033)-- (4*0.1,4*0.3);
\draw   (4*0.1,4*0.3)-- (4*0.2130190048395076,4*0.04537709642281416);
\draw [samples=100,rotate around={0:(0,0)},xshift=0cm,yshift=0cm,domain=4*0.2130190048395076:4*0.5199513674882179)] plot (\x,{(\x)^2/4});
\draw   (4*0.5797243057185343,4*0.3114258682101738)-- (4*0.5199513674882179,4*0.2703494245528678);
\draw [shift={(4*0.6745638963495582,4*0.2836928070184077)}]  plot[domain=1.9:2.8571037515665014,variable=\t]({4*1*0.09881128798941159*cos(\t r)+4*0*0.09881128798941159*sin(\t r)},{4*0*0.09881128798941159*cos(\t r)+4*1*0.09881128798941159*sin(\t r)});
\draw [shift={(4*0.5386990551836357,4*0.6393654747679093)}]  plot[domain=-1.106267150134368:-0.1,variable=\t]({4*1*0.24488321123101786*cos(\t r)+4*0*0.24488321123101786*sin(\t r)},{4*0*0.24488321123101786*cos(\t r)+4*1*0.24488321123101786*sin(\t r)});
\draw   [color=blue, dashed] (4*1.2,-4*1) -- (4*1.2,4*4);
\draw [<-, color=blue] (4*0.5,4*1.1) -- (4*0.9,4*1.1);
\begin{scriptsize}
\draw[color=blue] (4*0.7,4*1.2) node[scale=1.3] {$moving\; plane$};
\end{scriptsize}
\end{tikzpicture}
\caption{The cap region could a priori be quite singular. Whenever the moving plane reaches a point (away from the axis of symmetry) then this point must be smooth.}\label{amazing_figure}
\end{figure}
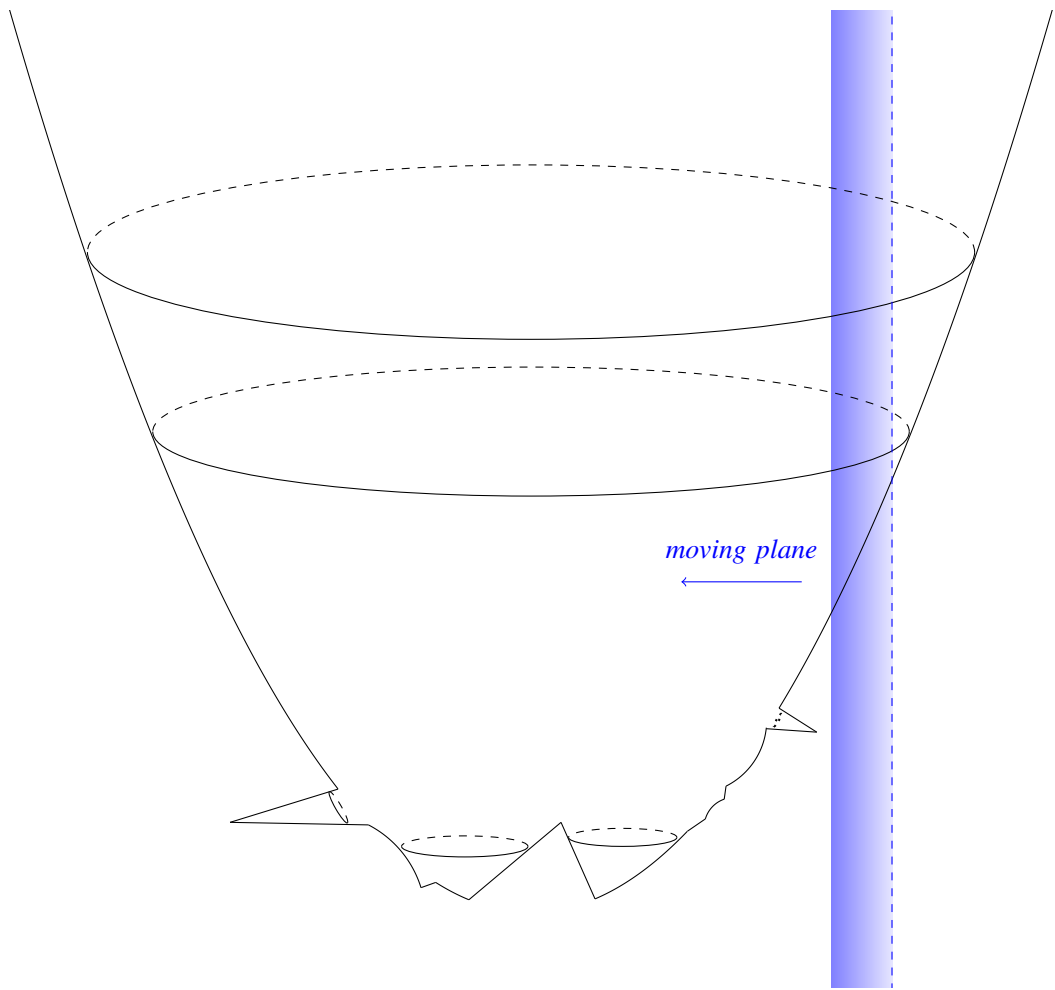

The gist of this principle is illustrated in Figure \ref{amazing_figure}. The key feature, in contrast with the classical moving plane principle, is that smoothness is a conclusion, not an assumption. This was of crucial importance in our recent proof of the  canonical neighborhood conjecture for mean curvature flow through neck-singularities in arbitrary dimensions \cite{CHHW}. There, the blowup limits that we encountered near a neck-singularity were potentially quite singular objects (integral Brakke flows satisfying a few technical conditions), and in order to classify them we had to establish smoothness and symmetry in tandem. Unfortunately, the parabolic (rather than elliptic) nature of \cite{CHHW}, together with the many other ingredients of that paper, may obscure the simplicity and generality of Principle~\ref{principle_new}.\\

In the present paper, which we hope is accessible to a larger class of readers, we develop the method in the elliptic setting of potentially singular hypersurfaces. Specifically, we implement the method to prove uniqueness results for stationary varifolds and CMC-varifolds,
inspired by classical uniqueness results for smooth hypersurfaces by Schoen \cite{Schoen}, Alexandrov \cite{Alexandrov}, Meeks \cite{Meeks} and Korevaar-Kusner-Solomon \cite{KKS}. These uniqueness results will be discussed in Section \ref{intro_stationary} and Section \ref{intro_CMC}, respectively. A description of our new method itself, including in particular a Hopf lemma for varifolds -- which seems to be of independent interest -- will be given in Section \ref{intro_Hopf}.

\bigskip

We recently learned that Bernstein-Maggi \cite{BernsteinMaggi} have developed a very interesting variant of the moving plane method that allows them to prove symmetry of certain singular Plateau surfaces, and seems rather different from our method which establishes smoothness.

\bigskip

\subsection{Uniqueness results for stationary varifolds}\label{intro_stationary}

In this section, we describe our uniqueness results for potentially singular minimal hypersurfaces in $\mathbb{R}^{n+1}$. These potentially singular objects are described most conveniently as stationary integral $n$-varifolds, as introduced by Almgren and Allard~\cite{Almgren,Allard}. An $n$-varifold is a measure-theoretic generalization of an $n$-dimensional surface. Integral means that at almost every point one can find a tangent plane of integer multiplicity, and stationary means that the varifold is a critical point of the $n$-dimensional area functional (see Section \ref{sec_prelim} for precise definitions and notation).\\

The moving plane method for noncompact hypersurfaces was pioneered by Schoen~\cite{Schoen}, who proved uniqueness of the catenoid among smooth minimal hypersurfaces that are asymptotic to two planes. Our first main theorem extends Schoen's uniqueness result to the setting of varifolds:

\begin{theorem}[Uniqueness of the catenoid]\label{main_complete_theorem}
Suppose $M$ is stationary integral $n$-varifold in $\mathbb{R}^{n+1}$ such that
\begin{enumerate}
\item some tangent cone at infinity is a multiplicity-$2$ plane,
\item $M$ has at least two ends,\footnote{By the convex hull property, $M$ has at least two ends if and only if there is a ball $B$ such that $M\setminus B$ is not connected.} 
\item $M$ has no triple junctions.
\end{enumerate}
Then $M$ is a smooth hypersurface of revolution, and thus either a pair of parallel planes or an $n$-dimensional catenoid.
\end{theorem}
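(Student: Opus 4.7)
The plan is to apply Principle~\ref{principle_new} with horizontal hyperplanes sliding in from infinity in every horizontal direction, promoting the symmetry of the tangent multiplicity-2 plane at infinity to reflective symmetry across every vertical hyperplane through a common vertical axis $\ell$, hence to smooth rotational symmetry; the ODE classification of rotationally symmetric minimal hypersurfaces then yields either a pair of parallel planes or the catenoid.

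First I would normalize coordinates so that the tangent multiplicity-2 plane at infinity is $\Pi=\{x_{n+1}=0\}$. Since the rescaled varifolds $\tfrac{1}{R}M$ converge to $2[\Pi]$ along a subsequence, Allard's regularity theorem implies that for $R$ large enough, $M\setminus B_R$ is a union of smooth minimal graphs over $\Pi$ of total multiplicity $2$. Hypothesis (ii) together with a standard combinatorial analysis of ends forces this decomposition to consist of exactly two single-valued smooth graphs. Schoen-type decay estimates for minimal graphs asymptotic to a plane then yield catenoidal asymptotics: each end has expansion $x_{n+1}=a_i\pm c_i|x|^{-(n-2)}+O(|x|^{-(n-1)})$ for $n\geq 3$ (with the analogous logarithmic form for $n=2$), producing a well-defined asymptotic vertical axis $\ell$.

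For the moving plane step, fix a horizontal direction $e_1$, let $H_t=\{x_1=t\}$ and let $R_t$ be reflection across $H_t$, and denote by $M_t^+$ the portion of $M$ in $\{x_1>t\}$. The asymptotics guarantee that for $t$ sufficiently large the reflected varifold $R_t(M_t^+)$ lies strictly on one side of $M\cap\{x_1<t\}$, in the sense needed to set up a varifold maximum principle. Decreasing $t$ to the critical value $t_*$, first contact occurs either at an interior point of $M\cap\{x_1<t_*\}$ or tangentially along $H_{t_*}$. At this configuration the varifold Hopf lemma announced in Section~\ref{intro_Hopf} does the crucial work: hypothesis (iii) rules out the degenerate triple-junction contacts that would otherwise obstruct its application, so the lemma forces $M$ to be smooth at the contact point and to coincide locally with its reflection $R_{t_*}(M)$. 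A standard open-closed continuation along $M$ propagates this equality globally, producing reflective symmetry across $H_{t_*}$, and matching with the asymptotic expansion pins $t_*$ to the $e_1$-coordinate of $\ell$.

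Iterating the same argument in every horizontal direction $e\in S^{n-1}$ yields reflective symmetry of $M$ across every vertical hyperplane through $\ell$, hence smooth rotational symmetry about $\ell$. The classical ODE for minimal hypersurfaces of revolution then forces $M$ to be either a pair of parallel planes or an $n$-dimensional catenoid, completing the proof. The main obstacle is the Hopf-lemma step: one must verify that hypothesis (iii) genuinely precludes every degenerate tangency that could arise at first contact, and that the varifold Hopf lemma applies uniformly at both interior contacts and the tangential boundary contact along $H_{t_*}$, so that smoothness and symmetry can indeed be concluded \emph{simultaneously} in the spirit of Principle~\ref{principle_new}.
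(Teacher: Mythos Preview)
Your outline follows the paper's strategy—Schoen-type asymptotics to initialize, then a moving-plane sweep powered by the varifold maximum principle and the varifold Hopf lemma—but there are two genuine gaps.

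First, you never establish that $M$ is \emph{tame}, and tameness is a hypothesis of the varifold Hopf lemma (Theorem~\ref{mirror-theorem_intro}), not a conclusion. The paper obtains it in two steps: hypotheses (i) and (ii) plus the monotonicity formula force $\Theta(M,x)<2$ everywhere, so all tangent cones have multiplicity one; then hypothesis (iii), via a short transversality/intersection-number argument, rules out triple junctions in every \emph{iterated} tangent cone as well. Your phrase ``hypothesis (iii) rules out the degenerate triple-junction contacts'' is too vague to stand in for this. Second, you misread the mechanism at the critical level $t_*$. Interior contact and tangential contact along $H_{t_*}$ are handled by \emph{different} tools, and neither yields ``smoothness and local coincidence with the reflection'' as you write. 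Interior contact is handled by the Solomon--White maximum principle (Theorem~\ref{strong_max_varifold_intro}): it gives local coincidence, unique continuation propagates it, and the asymptotics then force $t_*$ to the axis coordinate. Tangential contact along $H_{t_*}$ is where the varifold Hopf lemma enters, and its conclusion is that the point is \emph{smooth with distinct tangents}—this is a \emph{contradiction} to the existence of a singular point on $H_{t_*}$, not a coincidence statement. The correct logic is: if $t_*>0$, the smooth Hopf lemma handles regular points of $M\cap H_{t_*}$, so some singular point must sit there; the varifold Hopf lemma then says that point is regular after all; contradiction; hence $t_*=0$.

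One structural difference from the paper: before running vertical planes, the paper first runs a \emph{horizontal} moving plane (levels $\{x_{n+1}=s\}$) to get reflection symmetry across $\{x_{n+1}=0\}$. This forces $u_-=-u_+$, so both ends share the same center and the same coefficient $b$, which is exactly what makes the ``no contact at infinity'' step for the vertical sweep clean. You skip this and go straight to vertical planes. That route may work, but then you must independently prove $b>0$ (the paper does it by integrating $\Delta_M x_{n+1}=0$ over a large annulus, after using the varifold maximum principle to trap $M$ in the slab) and justify the start of the sweep without knowing a priori that the two ends are centered on a common axis; your sentence ``producing a well-defined asymptotic vertical axis $\ell$'' assumes precisely what this step would have to prove.
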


A triple junction is a point at which one of the tangent cones 
consists of three multiplicity-one halfplanes meeting at equal angles along their common edge.  We remark that, according to \cite{simon-cylindrical}, the tangent cone
to a stationary varifold at a triple junction point is unique, and in a small neighborhood of the point, the varifold is diffeomorphic to its tangent cone.  (The results of 
  \cite{simon-cylindrical} are not needed in this paper.)

The main novelty is that smoothness is a conclusion of our theorem rather than an assumption. In the study of minimal hypersurfaces, one often passes to weak limits, which could be potentially quite singular. Theorem \ref{main_complete_theorem} does not assume smoothness a priori, and thus can be used to analyze such limits.

In the same paper \cite{Schoen}, Schoen also proved symmetry for smooth compact minimal hypersurfaces with rotationally symmetric boundary. We extend this result as well:

\begin{theorem}[smoothness and symmetry for stationary varifolds with smooth boundary]\label{main_boundary_theorem}
Suppose that $\Gamma=\Gamma_1\cup\Gamma_2$ is the union of two smooth, closed, strictly convex $(n-1)$-dimensional manifolds lying in two parallel hyperplanes $Q_1$ and $Q_2$
in $\RR^{n+1}$.
Suppose $M$ is an integral $n$-varifold in $\RR^{n+1}$ 
with compact, connected support
 such that
\begin{enumerate}[\upshape (i)]
\item\label{annulus-stationary-hypothesis} $M$ is stationary in $\RR^{n+1}\setminus \Gamma$, 
\item\label{annulus-density-hypothesis} The density of $M$ at each point of $\Gamma$ is $1/2$, 
\item\label{annulus-triple-hypothesis} $M$ has no triple junctions.
\end{enumerate}
If $P$ is a hyperplane of symmetry of the boundary $\Gamma$, and if $P$ is perpendicular to hyperplanes $Q_i$, then $P$ is a hyperplane of symmetry of $M$.
 Moreover, the portion of $M$ on each side of $P$ is a smooth graph 
 over a region in $P$.

In particular, if $\Gamma$ is rotationally symmetric, 
 then $M$ is a piece of an $n$-dimensional catenoid.
\end{theorem}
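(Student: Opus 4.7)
Choose coordinates so that $P = \{x_1 = 0\}$ and $Q_i = \{x_{n+1} = c_i\}$. For $t \in \RR$ let $P_t = \{x_1 = t\}$, let $R_t$ denote reflection across $P_t$, and let $\tilde M_t$ be the pushforward by $R_t$ of the varifold $M$ restricted to $\{x_1 > t\}$, which is supported in $\{x_1 \le t\}$. The plan is to slide $t$ from $t_0 := \sup_{\spt M} x_1$ down to $0$, comparing $\tilde M_t$ with the restriction $M_t^-$ of $M$ to $\{x_1 < t\}$. Writing $\tilde M_s \le M_s^-$ for the relation that $M_s^- - \tilde M_s$ is a nonnegative integral varifold, set
\[
t^* := \inf\!\bigl\{t \in [0, t_0] : \tilde M_s \le M_s^- \text{ for every } s \in [t, t_0]\bigr\}.
\]
Strict convexity of $\Gamma_i$ and the convex hull property ensure $t_0 > 0$ and that this set is nonempty (for $s$ just below $t_0$ the supports of $\tilde M_s$ and $M_s^-$ are disjoint). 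The goal is $t^* = 0$; running the symmetric argument with $t \to 0^-$ from the other side then gives $R_0(M) = M$, proving that $P$ is a plane of symmetry of $M$.

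\textbf{Ruling out $t^* > 0$.} Suppose for contradiction that $t^* > 0$. A limiting/continuity argument produces a first touching point $p \in \spt \tilde M_{t^*} \cap \spt M_{t^*}^-$. If $p \in \{x_1 < t^*\} \setminus \Gamma$, then near $p$ both $\tilde M_{t^*}$ and $M_{t^*}^-$ are stationary integral varifolds without triple junctions, and one lies on one side of the other. The Hopf lemma for stationary varifolds (Section \ref{intro_Hopf}) forces coincidence in a neighborhood of $p$, with both varifolds smooth there. A connectedness/continuation argument (using that $M$ is connected and that the coincidence set is open and closed in the overlap region) propagates the coincidence throughout $\spt M \cap \{x_1 \le t^*\}$, including up to $\Gamma$. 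But this forces $\Gamma$ to be symmetric across $P_{t^*} \ne P$, contradicting strict convexity of $\Gamma_i$. If $p$ lies on the plane $P_{t^*}$ itself, the same Hopf lemma applied to $M$ and its reflection $R_{t^*}M$ yields the same conclusion. Finally, if $p \in \Gamma$, strict convexity of $\Gamma_i$ ensures that $R_{t^*}(\Gamma_i) \cap \Gamma_i = \Gamma_i \cap P_{t^*}$ transversally, and a boundary version of the varifold Hopf lemma---using hypotheses (\ref{annulus-density-hypothesis}) and (\ref{annulus-triple-hypothesis})---rules out tangency there as well.

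\textbf{Graph structure and catenoid conclusion.} Hence $t^* = 0$ and $P$ is a plane of symmetry of $M$. For every $t \in (0, t_0)$, any non-graphical behavior of $M \cap \{x_1 > t\}$ over $P_t$---e.g., a tangent plane containing the direction $e_1$---would create a strict touching between $\tilde M_t$ and $M_t^-$, contradicting the Hopf lemma together with $t^* = 0$. Thus $M \cap \{x_1 > t\}$ is a smooth multiplicity-one graph over a domain in $P_t$ for every $t > 0$, and letting $t \to 0^+$ yields the claimed smooth graph structure on each side of $P$. When $\Gamma$ is rotationally symmetric, applying the symmetry conclusion to every hyperplane through the axis of rotation produces full rotational symmetry of $M$; the resulting smooth generating curve satisfies the classical minimal surface of revolution ODE, whose only solutions bounded by the two round spheres in parallel hyperplanes (necessarily round by rotational symmetry and strict convexity) are pieces of an $n$-dimensional catenoid.

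\textbf{Main obstacle.} The most delicate step is the boundary touching case $p \in \Gamma$: one must upgrade the interior Hopf lemma of Section \ref{intro_Hopf} to a boundary statement applicable at a point of density $1/2$, and convert the strict convexity of $\Gamma_i$ into a quantitative obstruction to tangency of $R_{t^*}(\Gamma)$ with $\Gamma$ outside the transverse intersection $\Gamma \cap P_{t^*}$. The interior analysis rests directly on the varifold Hopf lemma, but propagating smoothness all the way up to the fixed boundary $\Gamma$ is the essential boundary-regularity input that this approach must supply.
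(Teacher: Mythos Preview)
Your outline has the right spirit but contains genuine gaps. First, the order relation ``$M_s^- - \tilde M_s$ is a nonnegative integral varifold'' is not the correct comparison: if the supports are disjoint (as you yourself note for $s$ near $t_0$), the signed measure $M_s^- - \tilde M_s$ is \emph{negative} on $\spt\tilde M_s$, so the relation fails exactly when you need it to start. The paper instead works with the enclosed region $K$ (Proposition~\ref{interior_lemma}) and the set-theoretic inclusion $K_s^*\subseteq K_s^-$. Second, at an interior touching point $p\in\{x_1<t^*\}$ you invoke the varifold Hopf lemma to force local coincidence, but Theorem~\ref{mirror-theorem_intro} concludes that the two varifolds are smooth with \emph{distinct} tangents, not that they coincide; coincidence is the job of the Solomon--White maximum principle (Theorem~\ref{strong_max_varifold_intro}), which however requires one of the two varifolds to be \emph{smooth}, something you have not arranged.

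The paper's key device, which your proposal is missing, is to build smoothness directly into the set $\mathcal S$ of admissible levels: one requires that every point of $M\cap\{x_1\ge s\}$ be regular with $\ee_1\cdot\nu>0$ (and that $M_s^+$ be graphical, $K_s^*\subseteq K_s^-$, and $M_s^*\cap M_s^-=\emptyset$). Then at $t=\inf\mathcal S$ the reflected piece $M_t^*$ is automatically smooth, so Solomon--White handles interior touching; if there is no interior touching, the only way $t\notin\mathcal S$ is via a singular point $p$ on the plane $\{x_1=t\}$, and there the varifold Hopf lemma yields a \emph{contradiction} (it says $p$ must be regular). Finally, your ``main obstacle'' at $\Gamma$ is a non-issue: Allard's boundary regularity theorem makes every density-$1/2$ boundary point regular, so the singular point $p$ cannot lie on $\Gamma$ and no boundary Hopf lemma is needed. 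You also need the preliminary tameness and density bound $\Theta<2$ (Lemma~\ref{lemma_tame}, via the extended monotonicity formula) to make the Hopf lemma applicable at all.
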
 

An interesting feature of the theorem is that it gives smoothness (away from the plane of symmetry) even if the boundary is symmetric only with respect to a single plane $P$, and not necessarily rotationally symmetric. Combining Theorem~\ref{main_boundary_theorem}  with some results from \cite{white-boundary-mcf} we also obtain:

\begin{corollary}\label{GMT_cor}
If the hypotheses~\ref{annulus-stationary-hypothesis},
 \ref{annulus-density-hypothesis} and~\ref{annulus-triple-hypothesis} are replaced by the
 hypotheses
\begin{enumerate}[\upshape (i$'$)]
\item\label{div-inequality} $\int \Div_M X\,d\mu_M \le \int_\Gamma|X|\,d\Hh^{m-1}$ 
for every $C^1$ vectorfield $X$ on $\RR^{n+1}$, and
\item\label{mod-2-boundary} The boundary of the mod $2$ flat chain associated to $M$ is $[\Gamma]$,
\end{enumerate}
then the conclusion of Theorem~\ref{main_boundary_theorem} holds.
\end{corollary}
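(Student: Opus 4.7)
The plan is to show that (i$'$) and (ii$'$) together imply the three hypotheses (i), (ii), (iii) of Theorem~\ref{main_boundary_theorem}; the corollary then follows directly from that theorem. The first step, deducing (i), is elementary: for any $C^1$ vector field $X$ with compact support in $\RR^{n+1}\setminus\Gamma$, applying (i$'$) to both $X$ and $-X$ gives
\[
\left|\int \Div_M X\, d\mu_M\right| \leq \int_\Gamma |X|\, d\Hh^{n-1} = 0,
\]
so $M$ is stationary on $\RR^{n+1}\setminus\Gamma$.

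The second step is to establish the boundary density identity $\Theta(\mu_M, p) = 1/2$ for every $p \in \Gamma$ (hypothesis (ii)), and this is the technical heart of the argument. Hypothesis (ii$'$) supplies a lower bound: since the mod 2 chain associated to $M$ has smooth boundary $[\Gamma]$ of multiplicity $1$, a tangent-cone/constancy argument for mod 2 chains forces the chain density at any $p\in\Gamma$ to be at least $1/2$, which in turn bounds $\Theta(\mu_M,p)$ from below. Hypothesis (i$'$) supplies the matching upper bound via the boundary monotonicity formula from \cite{white-boundary-mcf}, which controls $\mu_M(B_r(p))/\omega_n r^n$ from above in terms of the smooth boundary data and, in the limit $r\to 0$, yields $\Theta(\mu_M,p)\le 1/2$.

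For the third step, hypothesis (iii), I would argue directly from (ii$'$): a triple-junction point $p \notin \Gamma$ would admit a tangent varifold consisting of three half-$n$-planes meeting along an $(n-1)$-plane $L$, each of multiplicity $1$, so the associated mod 2 chain tangent would have boundary $3[L] \equiv [L]$ mod $2$, contradicting the vanishing of the chain boundary near $p$. Points of $\Gamma$ cannot carry triple junctions either, because the density there is already pinned to $1/2$ by the previous step.

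The central obstacle is the density upper bound at the boundary: it requires a boundary monotonicity formula that accommodates the one-sided divergence inequality (i$'$) rather than full stationarity, and uses only the smoothness of $\Gamma$ as external data. This is precisely the technical content we import from \cite{white-boundary-mcf}, and it is what makes the corollary substantive rather than a bookkeeping exercise. Once (i), (ii), (iii) are verified, Theorem~\ref{main_boundary_theorem} supplies the conclusion.
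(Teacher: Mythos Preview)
Your overall plan---reducing (i$'$) and (ii$'$) to the original hypotheses (i)--(iii) and then invoking Theorem~\ref{main_boundary_theorem}---is exactly what the paper does, and your handling of steps one and three matches the paper's argument essentially verbatim.

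The gap is in step two, specifically the upper bound $\Theta(\mu_M,p)\le 1/2$. You assert this follows from (i$'$) via a boundary monotonicity formula, but (i$'$) alone cannot give such a bound. Consider the cone consisting of a full $n$-plane $P$ containing the edge $L=T_p\Gamma$ together with one additional half-$n$-plane $H$ meeting $P$ along $L$. The full plane contributes nothing to the first variation, and $H$ contributes exactly $\Hh^{n-1}\lfloor L$, so (i$'$) holds; yet the density at the origin is $3/2$. No monotonicity formula fed only by (i$'$) and the smoothness of $\Gamma$ can exclude this.

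What the paper actually does is first invoke the convex hull property: since $M$ is compact and stationary away from $\Gamma$, and $\Gamma$ consists of strictly convex curves in parallel hyperplanes, every tangent cone to $M$ at a point of $\Gamma$ lies in a proper wedge. By Corollary~32 of \cite{white-boundary-mcf}, such a cone is then a union of $k$ half-planes along $T_p\Gamma$. The mod~$2$ hypothesis (ii$'$) forces $k$ to be odd, and the divergence inequality (i$'$), via Theorem~34 of \cite{white-boundary-mcf}, forces $k=1$; hence the density is exactly $1/2$. So the role of \cite{white-boundary-mcf} here is a tangent-cone classification and half-plane count in a wedge, not a monotonicity upper bound. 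Your lower/upper bound scheme could be repaired, but the upper half requires the wedge constraint (and hence the strict convexity of $\Gamma_a$), which your write-up omits.
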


For readers not familiar with mod $2$ flat chains, Hypothesis~\ref{mod-2-boundary} is equivalent to the statement:
If $C$ is a smooth simple closed curve in $\RR^{n+1}$, then for almost every $v\in \RR^{n+1}$, 
the number of intersections of $(C+v)\cap M$ (counting multiplicity) is equal to the mod $2$ linking number
of $C+\nu$ and $\Gamma$.

(The proof of Corollary~\ref{GMT_cor} is given at the end of
 Section~\ref{sec_boundary}.)

\bigskip

\subsection{Uniqueness results for varifolds of constant mean curvature}\label{intro_CMC}
In this section, we consider possibly singular hypersurfaces with constant mean curvature.  Variationally, such varifolds arise as critical points of the $n$-dimensional area functional subject to the constraint that the enclosed volume is kept fixed. Our results in this section require the following regularity assumption.\footnote{In the stationary case, we will see that tameness can be deduced, using monotonicity, from the assumptions of Theorem \ref{main_complete_theorem} and Theorem \ref{main_boundary_theorem}, respectively. However,  in the CMC case, tameness does not follow readily from  global assumptions.}

\begin{definition}[tameness]\label{def_tame}
An $n$-dimensional integral varifold $M$ in an $(n+1)$-dimensional Riemannian
manifold $N$ is called
\emph{tame} (in $N$) provided there is a smooth
$(n-1)$-dimensional manifold $\Gamma$ (called the \emph{boundary} of $M$) such that
\begin{enumerate}[\upshape (i)]
\item $M$ has constant mean curvature in $N\setminus \Gamma$,
\item At each point in $ M\setminus\Gamma$, each tangent cone is smooth with multiplicity one away from a set of $(n-1)$-dimensional
Hausdorff measure $0$, and 
\item\label{density-one-half-item} At each point of $\Gamma$, the density of $M$ is $1/2$.
\end{enumerate}
\end{definition}

By the Allard Boundary Regularity Theorem~\cite{Allard_boundary},  Condition~\ref{density-one-half-item} 
  is equivalent to the condition that, near $\Gamma$, $M$ is a smooth multiplicity-one
manifold-with-boundary, the boundary being $\Gamma$. 

The moving plane method was introduced by Alexandrov \cite{Alexandrov}, who proved that every compact connected smooth embedded CMC hypersurface must be a round sphere.  A related classical result of Meeks \cite{Meeks} states that there is no noncompact smooth embedded CMC surface in $\mathbb{R}^3$ with a single end. Inspired by these results, we have the following uniqueness theorem for spherical caps:

\begin{theorem}[Uniqueness of CMC spherical caps]\label{CMC_uniq}
Suppose that $M$ is a tame CMC varifold in $\mathbb{R}^{n+1}$ with compact, connected support. 
Suppose also that the boundary of $M$ is an $(n-1)$-sphere in an $n$-plane $P$,
and that $M$ lies on one side of $P$ and meets $P$ transversally.  Then $M$ is a portion of a round $n$-sphere. 
\end{theorem}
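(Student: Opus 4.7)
The plan is to apply the varifold moving plane method of Principle~\ref{principle_new} in every direction tangent to $P$, showing that $M$ is rotationally symmetric about the line $L$ normal to $P$ through the center $c$ of $\Gamma$, and then to classify the resulting rotationally symmetric compact CMC-hypersurface via the Delaunay ODE. Because $\Gamma$ is an $(n-1)$-sphere in $P$, every hyperplane containing $L$ is a symmetry plane of the boundary, so this is the natural symmetry to promote from the boundary into the interior.

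Fix a unit vector $v\in P$ and consider the hyperplanes $H_v^t = \{x\cdot v = t\}$ with reflections $R_v^t$. For $t$ sufficiently large, $M\cap\{x\cdot v > t\}$ is empty; decreasing $t$ with $M_v^{t,+}:= M\cap\{x\cdot v > t\}$, set
\[
t^* = \inf\{t \ge c\cdot v : R_v^s(M_v^{s,+}) \subset \text{region enclosed by } M\cup D \text{ for all } s \ge t\},
\]
where $D\subset P$ is the disk bounded by $\Gamma$. Tameness and the transverse meeting of $M$ with $P$ along the smooth sphere $\Gamma$ make this enclosed region well-defined (e.g.\ through the associated flat chain). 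Suppose for contradiction $t^* > c\cdot v$. Then at $t=t^*$ the reflected piece $R_v^{t^*}(M_v^{t^*,+})$ first contacts $M\cap\{x\cdot v\le t^*\}$ at some point $p$, and either (a) $p\notin H_v^{t^*}$, i.e.\ an interior tangency, or (b) $p\in M\cap H_v^{t^*}$, so the two pieces meet on the reflection hyperplane itself. In case (a) the strong maximum principle for tame CMC-varifolds forces $p$ to be a smooth point and the two varifolds to agree near $p$. In case (b) this conclusion follows instead from the Hopf lemma for CMC-varifolds of Section~\ref{intro_Hopf}, which again promotes $p$ to a smooth point and forces tangential agreement. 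In either case, elliptic unique continuation propagates the agreement across the whole component, hence $M = R_v^{t^*}(M)$ --- but this contradicts $t^*>c\cdot v$, since $\Gamma$ is symmetric across no hyperplane $H_v^\tau$ with $\tau > c\cdot v$. Thus $t^* = c\cdot v$ and $M$ is symmetric across $H_v^{c\cdot v}$. Varying $v$ over unit vectors in $P$ yields rotational symmetry of $M$ about $L$; moreover, exactly as in Figure~\ref{amazing_figure}, the same argument certifies smoothness at every point of $M\setminus L$.

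Away from $L$, $M$ is then a smooth, connected, rotationally symmetric CMC hypersurface whose profile curve $\gamma$ in a meridian half-plane starts on $P$ at radial distance equal to the radius of $\Gamma$ from $L$. Since $M$ is compact, connected, contained on one side of $P$, and has boundary only $\Gamma$, the other end of $\gamma$ must meet $L$. The classical Delaunay classification of CMC profile curves shows that the only such curves meeting the axis of revolution are circular arcs: cylinders, unduloids and nodoids all stay at positive distance from the axis. Therefore $\gamma$ is a circular arc and $M$ is a spherical cap. The main obstacle is case (b) above, where the reflected piece meets $M$ on the reflection hyperplane itself: this is exactly where the classical method breaks down without an a priori smoothness assumption, and where the new Hopf lemma for tame CMC-varifolds from Section~\ref{intro_Hopf} is the decisive tool, turning smoothness at the contact point into a conclusion rather than a hypothesis, in accordance with Principle~\ref{principle_new}.
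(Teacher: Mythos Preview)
Your overall strategy coincides with the paper's: the paper's proof of Theorem~\ref{CMC_uniq} literally reads ``the proof is similar as above, with the only change that now the boundary has only one component instead of two'', i.e.\ one runs the moving-plane argument of Theorem~\ref{main_boundary_combined} and Corollary~\ref{full_sym_cor} for each hyperplane through the axis $L$, obtains rotational symmetry and smoothness off $L$, and then concludes. Your endgame via the Delaunay ODE is a legitimate alternative to the paper's route (rotational invariance of the tangent cone at an axis point $\Rightarrow$ multiplicity-one plane by tameness $\Rightarrow$ regular by Allard).

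There is, however, a genuine confusion in your case (b). You write that the varifold Hopf lemma ``promotes $p$ to a smooth point and forces tangential agreement'', and then invoke unique continuation to propagate this agreement. But Theorem~\ref{mirror-theorem_intro} concludes (once alternative (a) is excluded) that $M$ and its reflection are smooth at $p$ with \emph{distinct} tangent planes---the opposite of agreement---so there is nothing to propagate. The correct logic, as in the paper's proof of Theorem~\ref{main_boundary_combined}, runs differently: at every \emph{regular} point of $M\cap H_v^{t^*}$ the smooth Hopf lemma already gives $\nu\cdot v>0$; hence if the plane is genuinely stuck at $t^*>c\cdot v$ (and there is no interior touching), there must be a \emph{singular} point $p\in M\cap H_v^{t^*}$. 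Applying Theorem~\ref{mirror-theorem_intro} at $p$ then yields that $p$ is smooth, an immediate contradiction. You also omit the step ruling out alternative~(a) of Theorem~\ref{mirror-theorem_intro} (curving oppositely); this is precisely where Proposition~\ref{interior_lemma}---the enclosed region $K$ and the fact that $\mathbf H=h\nu$ with $\nu$ the outward normal---enters, forcing the orientations induced on any mutual tangent cone by $M$ and by its reflection to be consistent.
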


We remark that for the Alexandrov theorem itself, there is a beautiful recent result by Delgadino-Maggi \cite{DelgadinoMaggi}, which establishes uniqueness among compact connected sets of finite perimeter without tameness assumption. Their proof relies on a clever use of the Heintze-Karcher inequality and generalizes an earlier argument by Montiel-Ros \cite{MontielRos}.

The main advantage of our own method is that it seems to have a much wider scope. In particular, we can deal with boundaries and with situations with much less symmetries. Moreover, our method also applies to certain other ambient manifolds as illustrated by the following theorem:

\begin{theorem}\label{alex_gen_thm}
Let $M$ be a tame CMC varifold with compact, connected support in an $(n+1)$-dimensional Riemannian manifold $N$. If $N$ is Euclidean space or hyperbolic space, then M is a geodesic sphere. If $N$ is ($n$-dimensional hyperbolic space)$\times \mathbb{R}$, then $M$ is smooth and rotationally invariant about an axis $\{p\}\times \mathbb{R}$.
\end{theorem}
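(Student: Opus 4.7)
The plan is to implement the moving plane method for CMC-varifolds developed in this paper (Principle \ref{principle_new}) in each of the three ambient geometries, using at each stage the varifold Hopf lemma (Section \ref{intro_Hopf}), which is available because $M$ is tame. For $N=\mathbb{R}^{n+1}$ and $N=\mathbb{H}^{n+1}$ we use reflections across all affine (resp.\ totally geodesic) hyperplanes. For $N=\mathbb{H}^n\times\mathbb{R}$ we use the product reflections $R_\gamma\times\mathrm{id}$, where $R_\gamma$ is the reflection of $\mathbb{H}^n$ across a totally geodesic hyperplane $\gamma\subset\mathbb{H}^n$. In all cases, these are isometries of $N$ whose fixed sets are totally geodesic hypersurfaces, which is what the moving plane method requires.

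\textbf{Moving plane.} Fix a direction $v$ (horizontal in the product case) and a smooth foliation $\{P_t\}_{t\in\mathbb{R}}$ of $N$ by totally geodesic hyperplanes perpendicular to $v$. Since $M$ is compact, $P_t\cap M=\emptyset$ for $|t|$ sufficiently large. Starting at such a $t$ and decreasing it, let $M_t^+$ denote the part of $M$ in the swept region and $\widetilde M_t^+$ its reflection across $P_t$. Exactly as in the proof of Theorem \ref{CMC_uniq}, the moving plane can be slid through $M$ until a first critical $t_*$ at which $\widetilde M_{t_*}^+$ contacts $M\setminus M_{t_*}^+$ either at an interior point or tangentially at a point of $P_{t_*}$; the varifold Hopf lemma then upgrades $M$ to be smooth near this contact point and forces $P_{t_*}$ to be a plane of symmetry of $M$.

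\textbf{Assembling the symmetry.} Performing the above in every admissible direction $v$ yields a family $\{P_v\}$ of planes of symmetry of $M$. Compactness of $M$ and a standard intersection argument imply that these planes have a common locus: a center $p\in N$ in the first two cases, and a vertical axis $\{p\}\times\mathbb{R}$ in the product case. The reflections across the $P_v$ then generate the full rotation group fixing $p$ (resp.\ the full horizontal rotation group about $\{p\}\times\mathbb{R}$), since $O(n+1)$ and the isometry group of $\mathbb{H}^n$ are generated by reflections across totally geodesic hyperplanes. Hence $M$ is a geodesic sphere in Euclidean or hyperbolic space, and $M$ is smooth and rotationally invariant about $\{p\}\times\mathbb{R}$ in $\mathbb{H}^n\times\mathbb{R}$; smoothness is part of the conclusion of the moving plane method in both settings.

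\textbf{Main obstacle.} The core technical step is verifying at the critical moment $t_*$ the hypotheses of the varifold Hopf lemma in these non-Euclidean backgrounds, which is what allows us to treat the potentially singular stratum of $M$ and to extract smoothness simultaneously with symmetry. Once this is in place, the remainder is a direct Alexandrov-style synthesis. A subsidiary point is checking, in $\mathbb{H}^n\times\mathbb{R}$, that product reflections alone suffice to produce every horizontal direction of symmetry; this follows from the transitivity of the isometry group of $\mathbb{H}^n$ on unit tangent vectors.
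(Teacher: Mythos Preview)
Your overall strategy is the same as the paper's: run the varifold moving plane method in every admissible direction to obtain a reflection hyperplane $P_v$ for each $v$, then assemble these into full rotational symmetry. The paper packages the first step as Theorem~\ref{main_man_thm}, noting that the Hopf lemma (Theorem~\ref{mirror-theorem_intro}) and the Solomon--White maximum principle (Theorem~\ref{strong_max_varifold_intro}) hold verbatim in any Riemannian background, so there is no new obstacle in $\mathbb{H}^{n+1}$ or $\mathbb{H}^n\times\mathbb{R}$ beyond exhibiting the required reflecting foliation (done in Lemma~\ref{lemma_hyp}). Your ``main obstacle'' paragraph therefore overstates the difficulty.

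There are, however, two genuine gaps in your sketch. First, your ``standard intersection argument'' that all the $P_v$ share a common point is not automatic: the moving plane in direction $v$ produces \emph{some} hyperplane of symmetry orthogonal to $v$, but you have not explained why these pass through a single point. The paper resolves this cleanly by fixing the center $p$ \emph{first}, as the center of the unique smallest closed ball containing $M$; any isometry preserving $M$ must preserve this ball and hence fix $p$, so every reflection symmetry produced is automatically across a totally geodesic hyperplane through $p$. (Your version can be repaired---e.g.\ the closure of the group generated by the $R_v$ is compact since it preserves the compact $M$, and then Cartan's fixed point theorem gives a common fixed point---but this is exactly the content you are waving away.) Second, in the $\mathbb{H}^n\times\mathbb{R}$ case, the moving plane conclusion gives smoothness only away from each $P_v$, hence only away from the axis $\{p\}\times\mathbb{R}$. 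Smoothness \emph{on} the axis needs a separate step: at any axis point of $M$ the tangent cone is rotationally invariant, so by tameness it is a multiplicity-one plane, and Allard's theorem gives regularity (cf.\ Corollary~\ref{full_sym_cor}). A smaller omission is that you never rule out alternative~(a) in the Hopf lemma; this requires the enclosed-domain argument of Proposition~\ref{interior_lemma} to orient the mean curvature vector consistently.
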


Regarding CMC varifolds with two boundary components, 
the following is inspired by the classical uniqueness theorem for Delaunay strips by Korevaar-Kusner-Solomon~\cite{KKS}:

\begin{theorem}[Uniqueness of Delaunay strips]\label{main_boundary_CMC_theorem}
Let $M$ be a  tame, constant mean curvature varifold in $\mathbb{R}^{n+1}$ with compact, connected support and
with boundary consisting of a pair of $(n-1)$-spheres that lie in parallel $n$-planes
and that have the same axis of rotational symmetry.
 Suppose that  $M$ is contained in the slab between the $n$-planes, and meets these planes transversally.
Then $M$ is a smooth embedded hypersurface of revolution, and hence a piece of a Delaunay hypersurface.
\end{theorem}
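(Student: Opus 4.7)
\emph{Strategy and setup.} The plan is to apply the moving plane method in every hyperplane containing the common rotation axis of $\Gamma_1$ and $\Gamma_2$, exploiting the varifold Hopf lemma established earlier in the paper to obtain simultaneously reflection symmetry of $M$ across each such hyperplane and smoothness of $M$ away from the axis. Rotational invariance will follow by varying the hyperplane direction, and the Delaunay ODE for rotationally invariant CMC hypersurfaces will complete the classification. Choose coordinates so that the common axis is $\{0\}\times\mathbb{R}\subset\mathbb{R}^n\times\mathbb{R}$ and the parallel $n$-planes are $Q_1=\mathbb{R}^n\times\{a_1\}$ and $Q_2=\mathbb{R}^n\times\{a_2\}$. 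For a unit vector $v\in\mathbb{R}^n\times\{0\}$ and $t\in\mathbb{R}$, let $P_t^v=\{x\cdot v=t\}$, $H_t^\pm=\{\pm(x\cdot v - t)>0\}$, $M_t^\pm=M\cap H_t^\pm$, and let $\widetilde{M}_t^+$ denote the reflection of $M_t^+$ across $P_t^v$.

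\emph{Moving plane and symmetry.} Since $M$ is compact, $M_t^+$ is empty for $t$ large. Slide $t$ down and set
\[
t^* = \inf\{\tau\in\mathbb{R} : \widetilde{M}_t^+\subseteq M\cap\overline{H_t^-}\text{ for every }t\geq\tau\},
\]
with inclusion understood in the varifold sense appropriate to CMC comparison (mean-curvature vectors oriented compatibly on the $H_t^-$ side). Suppose for contradiction $t^*>0$. At $t=t^*$ a first contact occurs between $\widetilde{M}_{t^*}^+$ and $M\cap\overline{H_{t^*}^-}$, falling into one of three cases: an interior contact off $P_{t^*}^v$, a tangential contact along $P_{t^*}^v$, or a contact at the boundary $\Gamma$. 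In each case, tameness of $M$ together with the transversal meeting of $M$ with $Q_1,Q_2$ places us in the hypotheses of the appropriate variant of the varifold Hopf lemma, which forces $\widetilde{M}_{t^*}^+$ to coincide with $M\cap\overline{H_{t^*}^-}$ near the contact point. Connectedness of $M$ then propagates the coincidence to full reflection symmetry of $M$ across $P_{t^*}^v$, which would force $\Gamma$ itself to be symmetric across $P_{t^*}^v$; but $\Gamma$ is only symmetric across hyperplanes containing the axis, contradicting $t^*>0$. Hence $t^*\leq 0$, and in particular $\widetilde{M}_0^+\subseteq M\cap\overline{H_0^-}$. Running the same argument with $v$ replaced by $-v$ gives $\widetilde{M}_0^-\subseteq M\cap\overline{H_0^+}$, so $M$ is symmetric across $P_0^v$. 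Varying $v$ over all horizontal unit vectors yields full rotational invariance of $M$ about the axis.

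\emph{Smoothness and classification.} The Hopf lemma applied at each first contact encountered in the moving plane process also delivers a local smoothness upgrade for $M$, by the same mechanism used in the proofs of Theorems~\ref{CMC_uniq} and~\ref{alex_gen_thm}. Every point of $M$ off the axis is reached in this way by a suitable choice of $v$, so $M$ is smooth on the complement of the axis; rotational invariance then forces any residual axial singularities to be isolated and removable by the tangent-cone analysis and tameness machinery already employed in the paper. With $M$ smooth, compact, connected, and rotationally invariant, the meridian curve of $M$ in a half-plane containing the axis satisfies the Delaunay ODE for the prescribed mean curvature, with endpoints on the two coaxial boundary circles and contained in the slab between $Q_1$ and $Q_2$; therefore $M$ is a piece of a Delaunay hypersurface.

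\emph{Principal obstacle.} The main technical hurdle is the case analysis for the varifold Hopf lemma at first contact: interior, tangential-at-$P_{t^*}^v$, and boundary-at-$\Gamma$ contacts must each be matched to an applicable Hopf variant, with the boundary case relying crucially on the transversality hypothesis to rule out exceptional first contacts along $\Gamma$. Once this case analysis is in place, the symmetry conclusion, the smoothness, and the final ODE classification follow essentially formally.
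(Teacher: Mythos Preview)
Your overall strategy---run the moving plane in each hyperplane containing the axis, then vary the direction to obtain rotational invariance, and finally handle axial points by tameness and Allard---matches the paper's route (Theorem~\ref{main_boundary_combined} and Corollary~\ref{full_sym_cor}). However, the core logic at the critical level $t^*$ has a genuine gap.

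You treat every first-contact scenario with ``the appropriate variant of the varifold Hopf lemma,'' asserting that it forces $\widetilde{M}_{t^*}^+$ to \emph{coincide} with $M\cap\overline{H_{t^*}^-}$ near the contact point. That is not what the Hopf lemma (Theorem~\ref{mirror-theorem_intro}) concludes: its alternatives are (a) the two CMC-varifolds curve oppositely, or (b) both are smooth at the point with \emph{distinct} tangent planes---neither yields local coincidence. The paper separates two situations. If $M_{t}^*$ actually touches $M_{t}^-$ at some point, the Solomon--White maximum principle (Theorem~\ref{strong_max_varifold_intro}), applied with the smooth reflected piece as barrier, gives local coincidence; unique continuation and connectedness then force $\{x_1=t\}$ to be a plane of symmetry of $\Gamma$, whence $t=0$. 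If instead $M_{t}^*$ and $M_{t}^-$ are disjoint, then (since the smooth Hopf lemma handles regular points) the failure of $t\in\mathcal S$ must come from a \emph{singular} point $p$ on $\{x_1=t\}$; now the varifold Hopf lemma, applied to $M$ and its reflection, forces $p$ to be smooth via alternative (b), a contradiction. Your ``smoothness upgrade at first contacts'' is thus also misdescribed: smoothness of $M\cap\{x_1>0\}$ is built into the open set $\mathcal S$ and comes from ruling out singular points on each level, not from contacts.

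A second gap is that you never rule out alternative (a) in the CMC case. The paper does this via Proposition~\ref{interior_lemma}: $M\cup D_1\cup D_2$ bounds a compact region $K$, so $\mathbf H=h\nu$ with $\nu$ the outward normal to $K$; any mutual tangent cone $(\Sigma,\Sigma)$ of $M$ and its reflection then inherits compatible orientations, so $M$ and $M'$ do \emph{not} curve oppositely at $p$. Your phrase ``mean-curvature vectors oriented compatibly on the $H_t^-$ side'' presupposes exactly this without supplying the enclosed-domain argument. Finally, no separate boundary Hopf variant is needed: Allard's boundary regularity (density $1/2$ on $\Gamma$) guarantees that any singular $p$ on $\{x_1=t\}$ lies away from $\Gamma$, which is how the paper disposes of boundary contacts.
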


\bigskip

\subsection{Moving plane method for varifolds}\label{intro_Hopf}
Recall that the classical moving plane method relies on the maximum principle and the Hopf Lemma in the smooth setting. To implement our new variant of the moving plane method, we need generalizations of the maximum principle and the Hopf lemma to the varifold setting.\\

To discuss our Hopf lemma for varifolds, denote by $\mathbb{H}\subset \mathbb{R}^{n+1}$ an open halfspace whose boundary $n$-plane contains the origin. Recall first that the classical Hopf lemma says that if $u_1,u_2$ are smooth solutions of a second order elliptic partial differential equation, defined in an open ball $B(0,r)$, such that $u_1(0)=u_2(0)$ and $u_1(x)<u_2(x)$ for all $x\in B(0,r)\cap\mathbb{H}$, then $u_1$ and $u_2$ have distinct normal derivatives at $0$.
The first guess regarding how to generalize this for varifolds would be to infer that the tangent cones at $0$ must be distinct. But actually our conclusion is much better. In essence, we can use the fact that one varifold lies above the other one to \emph{conclude} that $0$ must be a smooth point. Specifically, we prove:

\begin{theorem}[Hopf lemma for varifolds]\label{mirror-theorem_intro}
Let $M_1$ and $M_2$ be integral $n$-varifolds in $B(0,r)\subset\RR^{n+1}$ that have the same constant mean curvature $h$ (possibly $0$).
If
\begin{enumerate}
\item  $0\in M_1\cap M_2$ is a tame interior point for both $M_1$ and $M_2$,
\item $\partial \mathbb{H}$ is \textbf{not} the tangent cone to either $M_1$ or $M_2$ at $0$,
\item and $\reg M_1\cap \mathbb{H}$ and $\reg M_2\cap \mathbb{H}$ are disjoint,
\end{enumerate}
then either
\begin{enumerate}[(a)]
\item $h\ne 0$ and $M_1$ and $M_2$ curve oppositely at $0$, or
\item $M_1$ and $M_2$ are smooth at $0$, with distinct tangents.
\end{enumerate}
\end{theorem}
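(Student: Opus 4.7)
The plan is to reduce, via a tangent-cone analysis at $0$, to the classical Hopf lemma for smooth CMC hypersurfaces. The novel ingredient --- and the main difficulty --- is bootstrapping the disjointness of the regular parts inside $\mathbb H$ into smoothness of $M_1$ and $M_2$ at $0$. Once smoothness is in hand, the dichotomy between conclusions~(a) and~(b) follows from standard elliptic theory.

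First I would analyze the tangent cones $C_1, C_2$ to $M_i$ at $0$. Because the mean curvature rescales to zero under a blow-up, in both the stationary and CMC cases each $C_i$ is a stationary integer-rectifiable cone; tameness forces its regular part to have multiplicity one and its singular set to have zero $(n-1)$-dimensional Hausdorff measure. Coneness together with minimality on the smooth part forces each regular component of $C_i$ to be a half-hyperplane through $0$. These half-hyperplanes may a priori meet along lower-dimensional axes to form junction-type configurations, such as a ``book'' of sheets sharing a common spine.

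The heart of the argument is to rule out any such multi-sheet configuration and force each $C_i$ to consist of a single hyperplane of multiplicity one. Considering rescalings $M_i^\lambda := \lambda^{-1} M_i$, Allard's compactness yields smooth convergence $M_i^\lambda \to C_i$ away from $\sing C_i$, and hypothesis~(3) is scale invariant and so passes to the limit, forcing any two sheets --- one from $C_1$ and one from $C_2$ --- that lie in $\mathbb H$ to either coincide or be strictly separated there by the classical strong maximum principle for smooth minimal hypersurfaces. A case analysis of how half-hyperplanes through $0$ can be simultaneously disjoint in $\mathbb H$, using hypothesis~(2) to exclude the degenerate possibility that the only common separating hyperplane is $\partial \mathbb H$ itself, should then force each $C_i$ to be a single hyperplane through $0$ with multiplicity one. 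This step is the main technical obstacle; ruling out junction-type tangent cones, in particular those whose spines lie in $\partial \mathbb H$, is the tightest part of the argument.

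Once each $C_i$ is a single hyperplane with multiplicity one, Allard's regularity theorem gives that $M_i$ is a smooth graph over $C_i$ near $0$. The classical Hopf lemma for the quasilinear elliptic CMC equation then completes the proof: if $C_1 \ne C_2$ we land in~(b); if $C_1 = C_2$ and $h \ne 0$, the two graphs must curve in opposite normal directions at $0$ to remain disjoint in $\mathbb H$, giving~(a); and if $C_1 = C_2$ with $h = 0$, the strong maximum principle for minimal graphs would force $M_1 = M_2$ near $0$, contradicting hypothesis~(3).
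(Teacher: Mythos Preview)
Your proposal contains a fundamental error that derails the argument. You claim that ``coneness together with minimality on the smooth part forces each regular component of $C_i$ to be a half-hyperplane through $0$.'' This is false for $n\ge 2$: there are many nonplanar stationary cones in $\RR^{n+1}$ whose regular part is smooth and connected --- the Simons cone in $\RR^8$, or more generally the cone over any smooth minimal hypersurface of $S^n$. Tameness only tells you that $\Hh^{n-1}(\sing C_i)=0$ and that the regular part has multiplicity one; it does not force the cone to be polyhedral. Your subsequent ``case analysis of how half-hyperplanes through $0$ can be simultaneously disjoint in $\mathbb H$'' therefore simply does not apply, and even within the polyhedral picture you acknowledge that the key step --- ruling out junctions whose spine lies in $\partial\mathbb H$ --- is not actually carried out. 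Showing that every tangent cone is a multiplicity-one hyperplane is precisely the content of the theorem, and your outline supplies no mechanism to do it.

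The paper's proof is substantially different from a direct cone classification. It views the mutual tangent cones as varifold \emph{shrinkers} (critical points of the Gaussian-weighted area $F$) and invokes two Bernstein-type theorems from \cite{CHHW}: one saying that two shrinkers whose regular parts do not meet transversely in $\mathbb H$ either coincide or are both planes, and one saying that a shrinker that is $F$-stable in $\mathbb H$ must be a plane. The second theorem is what excludes nonplanar cones, but its hypothesis --- $F$-stability, not mere area-stability --- does not come for free. To obtain it, the paper runs a point-selection argument at carefully chosen scales $\lambda_j\to\infty$ (maximizing the separation between $\lambda_j M_1$ and $\lambda_j M_2$ over points of controlled regularity scale) to extract a positive Jacobi field $u$ on $\reg\Sigma\cap\mathbb H$ satisfying a growth bound along rays, and then uses a parabolic trick --- setting $w(x,t)=e^{t/2}u(e^{-t/2}x)$, which solves $\partial_t w=\mathcal{L}w$ for the shrinker Jacobi operator $\mathcal{L}$, and comparing with Dirichlet eigenfunctions --- to convert that growth bound into nonnegativity of the first Dirichlet eigenvalue of $\mathcal{L}$ on every compactly contained domain. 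None of this machinery is present in your outline, and the gap cannot be closed by the classical strong maximum principle alone.
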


The key feature of Theorem \ref{mirror-theorem_intro} is that smoothness is a conclusion and not an assumption. It thus provides a new tool to establish regularity of varifolds. Moreover, a quite unique advantage is that this works without any dimensional restrictions and stability conditions (in contrast to other available methods where one usually assumes stability and $n<7$). For our new moving plane method, the case of particular interest is when $M_2$ is the image of $M_1$ under reflection
in the plane $\partial \mathbb{H}$.\\

Let us now explain some technical details: For a stationary or CMC integral $n$-varifold $M$, as in the previous subsection, a point $x\in M$ is called \emph{tame} if each tangent cone of  $M$ at $x$ is smooth with multiplicity one away from a set of $(n-1)$-dimensional Hausdorff measure zero. 
This tameness condition cannot be dropped, since the statement clearly fails for triple-junctions. Next, $\reg M$ simply denotes the set of regular points. Finally, if $M_1$ and $M_2$ have the same constant mean curvature $h$, we say that $M_1$ and $M_2$ \emph{curve oppositely} at a common  tame point $x$ if $M_1$ and $ M_2$ have a mutual tangent cone at $x$, but with the opposite orientation (see Definition \ref{def_opp_curv}). This scenario can often be ruled out in applications, e.g. when $M_a$ arises as the boundary of some domain $K_a$.\\

Next, let us briefly discuss the maximum principle. Fortunately, nonsmooth versions of the maximum principle for hypersurfaces have been studied extensively in the literature (in stark contrast to the Hopf lemma). The sharpest result has been obtained by Wickramasekera \cite{Wick_max}, building on earlier work by Simon \cite{Simon_max} and Ilmanen \cite{Ilmanen_max}. For our purpose, the following simple variant by Solomon and the last author, where one of the two varifolds is assumed to be smooth, is sufficient:

\begin{theorem}[{Maximum principle for varifolds \cite{SolomonWhite}}]\label{strong_max_varifold_intro}
Let $M_1$ be a smooth minimal hypersurface defined in a ball $B(0,r)$ centered at $0\in M_1$ and with small enough radius $r>0$ such that $ M_1$ separates $B(0,r)$ into two open connected components, $U$ and $U'$.
Let $M_2$ be a stationary integral $n$-varifold in $B(0,r)$, and assume that $0\in M_2$ is a tame interior point. If
\begin{equation}\label{max_princ_cont_intro}
 M_2 \subseteq U\cup M_1,
\end{equation}
then $0$ is a smooth point for $M_2$ and there exists some $\eps>0$ such that 
\begin{equation}\label{conclusion_strong_max_intro}
 M_2 \cap B(0,\eps) = M_1 \cap B(0,\eps).
\end{equation}
The same conclusion holds in the CMC-case, unless $M_1$ and $M_2$ curve oppositely at $0$.
\end{theorem}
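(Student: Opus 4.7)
My approach is to blow $M_2$ up at $0$, show that every tangent cone is the tangent plane $P := T_0 M_1$ with multiplicity one, invoke Allard's $\varepsilon$-regularity theorem to conclude that $0$ is a smooth point of $M_2$, and then finish with the classical smooth strong maximum principle.

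By the monotonicity formula and Allard's compactness theorem, there exists a tangent cone $T$ to $M_2$ at $0$. In the CMC case the mean curvature rescales to zero under blow-up, so $T$ is a stationary integral $n$-varifold in either case. The tameness hypothesis ensures that $\reg T$ is a smooth embedded minimal hypersurface of multiplicity one, dense in $\spt T$, with $\Hh^{n-1}(\spt T \setminus \reg T) = 0$. Since $M_1$ is smooth at $0$ with tangent plane $P$, the rescalings of $M_1$ converge to $P$ as varifolds, and the hypothesis $\spt M_2 \subseteq \overline U$ (which follows from $M_2 \subseteq U \cup M_1$) passes to the limit to give $\spt T \subseteq \overline H$, where $H$ is the open halfspace bounded by $P$ corresponding to $U$.

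The crux, and the main obstacle, is to upgrade this one-sided inclusion to $T = P$ with multiplicity one. Applying the classical smooth strong maximum principle to the smooth minimal hypersurfaces $\reg T$ and $P$, one sees that at any touching point $x \in \reg T \cap P$ the two coincide in a neighborhood of $x$; in particular $\reg T \cap P$ is relatively open and closed in $\reg T$, hence a union of connected components of $\reg T$. To exclude components of $\reg T$ lying strictly in the open halfspace $H$, one passes to the link $\Sigma := \spt T \cap S^n$: its regular part is a smooth $(n-1)$-dimensional minimal submanifold of $S^n$, and the height function $f(y) = y_{n+1}$ satisfies $\Delta_\Sigma f = -(n-1) f$. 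A component of $\reg \Sigma$ contained in the open upper hemisphere would give a positive superharmonic function $f$ whose infimum must be attained (this uses the cone structure of $T$ together with the $\Hh^{n-1}$-smallness of $\sing T$ from tameness to control behavior near the relative boundary of the component, and can alternatively be implemented by sliding an explicit smooth minimal barrier such as a rescaled catenoid between $\reg T$ and $P$), contradicting the strong minimum principle. Hence $\spt T \subseteq P$, so the constancy theorem yields $T = kP$, and tameness forces $k = 1$.

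Having identified the tangent cone as a multiplicity-one hyperplane, Allard's regularity theorem implies that $M_2$ is a smooth minimal (respectively CMC) hypersurface in a neighborhood of $0$, tangent to $P$. Since $M_2$ lies on one side of $M_1$ and the two agree at $0$ with a common tangent plane, the classical smooth Hopf lemma for minimal (resp.\ CMC) hypersurfaces gives $M_2 \equiv M_1$ in a neighborhood of $0$. In the CMC case this final smooth comparison requires that the two hypersurfaces bound their ``positive'' sides on the same side at the touching point; the excluded ``curve oppositely'' scenario is precisely the obstruction to this compatibility, which is why it must be ruled out in the statement.
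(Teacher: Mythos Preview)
The paper does not actually prove this theorem; it is quoted as a result of Solomon and White \cite{SolomonWhite}, with the tameness hypothesis and the smoothness conclusion added on. So there is no argument in the paper to compare yours against, and your attempt has to be judged on its own.

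Your outline is reasonable in its broad shape---blow up, identify the tangent cone as the multiplicity-one plane $P$, apply Allard, then the smooth strong maximum principle---and the last two steps are fine. The problem is the step you yourself flag as ``the crux, and the main obstacle'': showing that a stationary integral cone $T$ with $\spt T\subseteq\overline H$ must satisfy $\spt T\subseteq P$. This statement \emph{is} the Solomon--White strong maximum principle applied to the pair $(T,P)$, so invoking it would be circular, and your two suggested workarounds do not close the gap. For the link argument, the sentence ``whose infimum must be attained'' is exactly where the difficulty lives: the component $\Sigma_0\subset\reg\Sigma$ is noncompact, its closure may meet $\sing\Sigma$, and tameness only yields $\Hh^{n-2}(\sing\Sigma)=0$ in the $(n-1)$-dimensional link---codimension strictly bigger than one but not two, which is short of what standard capacity/removability arguments require to push a minimum principle across the singular set. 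The parenthetical catenoid-barrier alternative is precisely the Solomon--White proof, so mentioning it does not constitute an independent argument. In short, you have correctly reduced the theorem to the halfspace rigidity of stationary cones and then correctly deduced the smoothness and coincidence conclusions from it, but you have not supplied a self-contained proof of that rigidity; you are relying on the very result being cited.
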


This is well-suited for our purpose, since smoothness of $M_2$ is not an assumption but a conclusion. Having discussed these tools, let us now describe the moving plane method for varifolds: \\

\noindent\textbf{Description of the method.} Suppose we would like to establish smoothness and symmetry of a stationary or CMC varifold $M$, with respect to a plane $P$. Denote by $P_t$ the plane parallel to $P$ at level $t$ above $P$. The idea is then to push down the plane $P_t$, starting from a sufficiently high level, to establish smoothness and symmetry in tandem. More precisely, we want to show that for all $t>0$ we simultaneously have:
\begin{enumerate}[(i)]
\item $M$ can be reflected across $P_t$, \emph{and}
\item the part of $M$ that lies above $P_t$ is smooth.
\end{enumerate}

We first check that this is true for sufficiently large $t$, either by compactness or by suitable asymptotics. Suppose towards a contradiction we get stuck at some level $t>0$. We then argue that this contradicts either the maximum principle for varifolds (Theorem \ref{strong_max_varifold_intro}) or the Hopf lemma for varifolds (Theorem \ref{mirror-theorem_intro}). We can thus push the moving plane all the way to level $0$, which establishes smoothness and symmetry.

\bigskip

This article is organized as follows. In Section \ref{sec_prelim}, we summarize some preliminaries about varifolds. In Section \ref{sec_hopf}, we prove our Hopf lemma for varifolds (Theorem \ref{mirror-theorem_intro}). In Section \ref{sec_boundary}, we prove our results for compact varifolds with boundary, specifically Theorem \ref{main_boundary_theorem}, Theorem \ref{CMC_uniq} and Theorem \ref{main_boundary_CMC_theorem}. In Section \ref{sec_complete}, we address the noncompact case and prove uniqueness of the catenoid (Theorem \ref{main_complete_theorem}). Finally,  in Section \ref{sec_man}, we explain how the method can be used in more general ambient manifolds, and prove Theorem \ref{alex_gen_thm}.

\bigskip

\noindent\textbf{Acknowledgments.} RH has been partially supported by an NSERC Discovery Grant (RGPIN-2016-04331) and a Sloan Research Fellowship. OH has been partially supported by a Koret Foundation early career scholar award. BW has been partially supported by NSF grant DMS-1711293. We thank the referee for very detailed comments.\\

\bigskip

\section{Preliminaries and Notation}\label{sec_prelim}

Standard references for varifolds are \cite{Allard,Simon_GMT}. For a more gentle introduction we recommend \cite{DeLellis}. Here, we briefly collect what we need for the present paper.\\

In the following, we denote by $\mathcal{H}^n$ the $n$-dimensional Hausdorff measure of subsets of $\mathbb{R}^{n+1}$. Recall that a set $M\subset \mathbb{R}^{n+1}$ is called \emph{locally $n$-rectifiable} if it has locally finite $\mathcal{H}^n$-measure and is contained in a countable union of $C^1$-hypersurfaces, up to a set of $\mathcal{H}^n$-measure zero.

\newcommand{\Ll}{\mathcal{L}}

\begin{definition}[integral varifold]
An \emph{integral $n$-varifold $V$ in $\mathbb{R}^{n+1}$} is an equivalence class of pairs $(M,\theta)$, where
$\theta(\cdot)$ is an nonnegative function in $\Ll^1_{loc}(d\Hh^n)$
such that $\theta(\cdot)$ is integer-valued $\Hh^n$-almost everywhere and
     such that
\begin{equation}
     M: = \{x: \theta(x)>0\}.
\end{equation}
   is countably $n$-rectifiable.
Two such pairs $(M_1,\theta_1)$ and $(M_2,\theta_2)$ are equivalent provided
$\theta_1=\theta_2$ except on a set of $\Hh^n$-measure $0$.
The varifold $V$ determines a Radon measure $\mu_V$ on $\RR^{n+1}$ given by
\begin{equation}
   \mu_V(S) = \int_{S\cap M}\theta(x)\,d\Hh^n(x).
\end{equation}
The \emph{support} of $V$ is defined to be the support of the measure $\mu_V$.
\end{definition}

In general, the $M$ and $\theta$ are only defined modulo $\Hh^m$-nullsets.
However, the varifolds that arise in this paper are either stationary or 
have constant mean curvature, and consequently the limit
\begin{equation}\label{density-exists}
  \theta_V(x) = \lim_{r\to 0}\frac{\mu_V(\mathbf{B}(x,r))}{\omega_nr^n}
\end{equation}
exists for all $x$ and is greater than $0$ at every point in the support of $\mu_V$. 
Thus for such $V$ there is a canonical choice of $M$ and $\theta$, 
namely $M=\spt\mu_V$ and $\theta=\theta_V$.

Also, the varifolds in this paper all have the property that $\theta=1$
for $\Hh^n$-almost every $x\in M=\spt\mu_V$.  Thus for such $V$, 
the varifold is determined by its support $M$ via
\begin{equation}
   \mu_V(S) = \Hh^n(S\cap M)
\end{equation}
and therefore $\theta=\theta_V$ is also determined by $M$ by~\eqref{density-exists}.

If $M$, $p$, and $S$ are a varifold, a point, and a set in $\RR^{n+1}$, 
 we will usually abbreviate expressions such as
$p\in \spt M$ and $\spt M\subseteq S$ by $p\in M$ and $M\subseteq S$.

We will often hide the multiplicity function $\theta$ in the notation, and simply talk about the varifold $M$.  

 The \emph{first variation} of $V$ is given by\footnote{Recall that any locally $n$-rectifiable $M\subset\mathbb{R}^{n+1}$ has an approximate tangent plane $T_xM$ at almost every $x\in M$.}
\begin{equation}
\delta V(X)=\int \mathrm{div}_{T_xM}X \, d\mu_M,\qquad \qquad \textrm{where } X\in C_c^1(\mathbb{R}^{n+1},\mathbb{R}^{n+1}),
\end{equation}
which generalizes the usual formula for the first variation of area of smooth hypersurfaces.\\

The varifold $V$ is called \emph{stationary}, if $\delta V=0$.
More generally, we say that the $V$ has \emph{constant mean curvature $h$} in an open set $U$
if there is a constant $h\ne 0$ and a $\mu_V$-measurable vectorfield $H$ with 
$|H(\cdot)|=h$ 
such that
\begin{equation}
  \delta V(X) = -\int X\cdot H\,d\mu_V
\end{equation}
for all $C^1$ vectorfields $X$ compactly supported in $U$.

By the monotonicity formula for stationary varifolds the function
\begin{equation}\label{monotonicity}
r\mapsto \frac{\mu_M(B(x,r))}{\omega_n r^n}\quad \textrm{ is monotone},
\end{equation}
and constant only on cones. The case of varifolds with constant mean curvature $h$
is similar: the function in~\eqref{monotonicity} need not be monotone, but if we multiply by $e^{hr}$, the resulting function is monotone.
In any case,  every $x\in M$ has a well defined \emph{density} given by \eqref{density-exists}.
By Allard's regularity theorem~\cite{Allard}, there exists some universal constant $\eps=\eps(n)>0$ so that every point $x\in M$ with $\theta_V(x)<1+\eps$ is regular. In particular, if we decompose
\begin{equation}
M=\reg M \cup \sing M,
\end{equation}
then the regular part is open and the singular part is closed.\\

By Allard's compactness theorem \cite{Allard}, any sequence $V_i=(M_i,\theta_i)$ of stationary or CMC integral $n$-varifolds with locally uniformly bounded measure and uniformly bounded mean curvature has a subsequence that converges to a stationary or CMC integral $n$-varifold $V=(M,\theta)$. Here, convergence in the sense of varifolds means that\footnote{In particular, this implies that $\mu_{M_i}$ converges to $\mu_M$ in the sense of measures. But the notion of convergence of varifolds also captures some important additional information about the convergence of the approximate tangent planes.}
\begin{equation}
\int \varphi(x,T_xM_i)\, d\mu_{M_i}(x)\to \int \varphi(x,T_xM)\, d\mu_{M}(x)
\end{equation}
for all compactly supported continuous functions $\varphi$ on the Grassmannian $\mathrm{Gr}_n(\mathbb{R}^{n+1})$.\\

Given $V=(M,\theta)$, a point $x\in M$, and scales $\lambda_i\to \infty$, let $V_i=\lambda_i\cdot(V-x)$ be the sequence of varifolds that is obtained from $V$ by shifting $x$ to the origin and rescaling by $\lambda_i$. By the monotonicity formula and Allard's compactness theorem, we can always pass to a subsequential limit $C$, called a \emph{tangent cone} at $x$. Tangent cones are always conical, i.e. $\lambda\cdot C=C$ for all $\lambda>0$. In particular, if some tangent cone at $x$ is a plane with multiplicity one (here, and in related situations, we also used that the multiplicity is simply a constant by Allard's constancy theorem~\cite{Allard}), then $x$ is regular by Allard's Regularity Theorem.

\bigskip

\section{Hopf lemma for varifolds}\label{sec_hopf}

The goal of this section is to prove the Hopf Lemma for varifolds (Theorem \ref{mirror-theorem_intro}), which we restate here for convenience of the reader:

\begin{theorem}[Hopf lemma for varifolds]\label{mirror-theorem_restated}
Let $M_1$ and $M_2$ be integral $n$-varifolds in $B(0,r)\subset\RR^{n+1}$ that 
have the same constant mean curvature $h$ (possibly $0$), and let
$\mathbb{H}$ be an open halfspace of $\RR^{n+1}$ with $0\in\partial\mathbb{H}$.
If
\begin{enumerate}[\upshape (i)]
\item\label{hopf-hypothesis-i}  $0\in M_1\cap M_2$ is a tame interior point for both $M_1$ and $M_2$,
\item\label{hopf-hypothesis-ii} $\partial \mathbb{H}$ is \textbf{not} the tangent cone to either $M_1$ or $M_2$ at $0$,
\item\label{hopf-hypothesis-iii} and $\reg M_1\cap \mathbb{H}$ and $\reg M_2\cap \mathbb{H}$ are disjoint,~
\end{enumerate}
then either
\begin{enumerate}[(a)]
\item\label{hopf-a} $h\ne 0$ and $M_1$ and $M_2$ curve oppositely at $0$ (in the sense of Definition~\ref{def_opp_curv}), or
\item\label{hopf-b} $M_1$ and $M_2$ are smooth at $0$, with distinct tangents.
\end{enumerate}
\end{theorem}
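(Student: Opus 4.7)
The plan is to analyze tangent cones at $0$ and combine the varifold maximum principle (Theorem~\ref{strong_max_varifold_intro}) with the tameness hypothesis to reduce to the classical smooth Hopf lemma. The end game is to show $C_1,C_2$ are multiplicity-one hyperplanes and then linearize.

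First I would blow up $M_1$ and $M_2$ at the origin. By the monotonicity formula and Allard's compactness theorem, along some sequence $\lambda_i\to\infty$ there exist tangent cones $C_1$ and $C_2$. Since any CMC constant rescales as $h/\lambda_i\to 0$, each $C_a$ is in fact a \emph{stationary} integral cone in both the minimal and CMC settings. Tameness~\ref{hopf-hypothesis-i} guarantees that $\reg C_a$ is open and dense in $\spt C_a$ and carries multiplicity one on each connected component (by the constancy theorem). Moreover, Allard's regularity gives smooth convergence of the rescaled varifolds near each regular point of $C_a$, so the disjointness hypothesis~\ref{hopf-hypothesis-iii} passes to the limit: $\reg C_1\cap \reg C_2\cap \mathbb{H}=\emptyset$. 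A standard application of the strong maximum principle for minimal cones further rules out the possibility that some $C_a$ is contained in a closed half-space (otherwise $C_a$ would have to coincide with $\partial\mathbb{H}$, violating~\ref{hopf-hypothesis-ii}), so each $C_a$ has nonempty regular part in both $\mathbb{H}$ and the opposite half-space.

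The heart of the argument is to show that each $C_a$ is a multiplicity-one hyperplane. I plan to proceed by contradiction. If $C_1$ is not a multiplicity-one hyperplane, then either it is non-planar (for example a union of hyperplanes through the origin or a higher-complexity stationary cone) or it is a hyperplane with multiplicity $\geq 2$. In either case, I would pick a regular point $p\in\reg C_1\cap\mathbb{H}$ and use the smooth sheet of $C_1$ through $p$ as a one-sided barrier for $C_2$. Since $\reg C_2\cap \mathbb{H}$ avoids $\reg C_1\cap\mathbb{H}$ and both cones are invariant under positive dilation, the sheets of $C_2$ are forced to lie on one definite side of this barrier within a conical neighborhood of the ray through $p$; conical self-similarity propagates the separation all the way down to $0\in C_1\cap C_2$, where Theorem~\ref{strong_max_varifold_intro} forces $C_2$ to coincide with this sheet of $C_1$ in a neighborhood of $0$, contradicting either the disjointness of regular parts or the conical extension of $C_1$ beyond the chosen sheet. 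The symmetric argument applied to $C_2$ shows both tangent cones are genuine multiplicity-one hyperplanes $P_1, P_2 \neq \partial\mathbb{H}$.

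Once this is established, Allard's regularity gives smoothness of $M_a$ at $0$ with tangent plane $P_a$. If $P_1\neq P_2$, we land directly in conclusion~\ref{hopf-b}. Otherwise $P_1=P_2=:P$, and I would write $M_a$ near $0$ as a graph $u_a$ over $P$ with $u_a(0)=0$ and $\nabla u_a(0)=0$. Each $u_a$ satisfies a quasilinear elliptic equation (the minimal surface equation or the prescribed mean curvature equation). Disjointness of regular parts in $\mathbb{H}$ forces, after possibly relabeling, $u_1<u_2$ on a nonempty open subset of $P\cap\mathbb{H}$; applying the classical smooth Hopf lemma to $u_2-u_1$ (interpreting its PDE as a linear elliptic equation with bounded coefficients) then produces distinct normal derivatives at $0$, contradicting $P_1=P_2$ in the stationary case. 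In the CMC case the same argument works unless $M_1$ and $M_2$ curve oppositely at $0$, which is exactly conclusion~\ref{hopf-a}. The main obstacle will be the middle paragraph, specifically ruling out non-planar or higher-multiplicity tangent cones using only the one-sided disjointness of regular parts; the delicate interplay of the constancy theorem, conical self-similarity, smooth convergence at regular points, and the varifold maximum principle is where the real work lies.
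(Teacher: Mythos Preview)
Your proposal has a genuine gap at exactly the point you flag as ``where the real work lies.''

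First, a factual error: disjointness of regular parts does \emph{not} pass to the blowup limit. If $\lambda_i M_a \to C_a$, then smooth convergence near regular points only yields that $\reg C_1\cap\mathbb{H}$ and $\reg C_2\cap\mathbb{H}$ do not intersect \emph{transversely}; they may very well coincide. (Think of two disjoint graphs over a plane $P$ both converging to $P$.) So the premise of your barrier argument---that a sheet of $C_1$ lies strictly to one side of $C_2$ in $\mathbb{H}$---is simply not available. In particular nothing you wrote excludes the scenario $C_1=C_2=:\Sigma$ with $\Sigma$ a genuinely singular stationary cone, and your invocation of Theorem~\ref{strong_max_varifold_intro} ``at $0$'' is vacuous there (both cones pass through $0$, and the theorem requires one side to be smooth at the contact point).

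The paper's proof handles precisely this scenario, and the machinery is substantially heavier than what you propose. It first uses the Second Bernstein-type theorem for varifold shrinkers (Theorem~\ref{brendle-connected}, from \cite{CHHW}) to reduce to the dichotomy: either both tangent cones are distinct planes (done), or every mutual tangent cone satisfies $\Sigma_1=\Sigma_2$. In the latter case the proof performs a careful point-selection at approximately maximal regularity scale to produce, after renormalization, a \emph{positive} Jacobi field $u$ on $\reg\Sigma\cap\mathbb{H}$ with a controlled growth estimate towards the vertex. This growth estimate is then fed into a parabolic trick (writing $w(x,t)=e^{t/2}u(e^{-t/2}x)$ and using $\partial_t w=\mathcal{L}w$) to upgrade minimal stability to stability for the Gaussian $F$-functional, at which point the First Bernstein-type theorem (Theorem~\ref{brendle-stable}) forces $\Sigma$ to be a plane. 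Only then does Allard plus the classical Hopf lemma finish the job. None of these ingredients---the two Bernstein theorems for shrinkers, the regularity-scale point selection, the Jacobi field construction, or the parabolic stability argument---appear in your outline, and there is no evident way to replace them by a direct barrier or maximum-principle argument.
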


\begin{corollary}\label{hopf-corollary}
Let $M_1$ and $M_2$ be integral $n$-varifolds in $B(0,r)\subset\RR^{n+1}$ that 
have the same constant mean curvature $h$ (possibly $0$). 
Suppose
\begin{enumerate}[\upshape (i)]
\item\label{cor-hyp1}  $0\in M_1\cap M_2$ is a tame interior point for both $M_1$ and $M_2$,
\item\label{cor-hyp2} $\reg M_1$ and $\reg M_2$ are disjoint.
\end{enumerate}
Then $h\ne 0$ and $M_1$ and $M_2$  curve oppositely at $0$.
\end{corollary}

\begin{proof}[Proof of Corollary~\ref{hopf-corollary}]
Since $\reg M_1$ and $\reg M_2$ are disjoint, $0$ belongs to at most one of those sets.
If $0\in \reg M_i$, we let $\mathbb{H}$ be an open halfspace 
(with $0\in \partial \mathbb{H}$) whose boundary is not equal
to $\Tan(M_i,0)$.  If $0$ is not in $\reg M_1$ or $\reg M_2$, we let $\mathbb{H}$ be any
open halfspace (with $0\in \partial \mathbb{H}$).  Since $M_1$ and $M_2$ cannot both be smooth at $0$,
it follows from Hopf Lemma~\ref{mirror-theorem_restated}
 that $h\ne 0$ and that $M_1$ and $M_2$ are oppositely curved at $0$.
\end{proof}

The precise definition of curving oppositely, based on mutual tangent cones, is as follows:

\begin{definition}[mutual tangent cones, curving oppositely]\label{def_opp_curv} Let $M_1$ and $M_2$ be varifolds with constant mean curvature $h$  in an open ball $B(0,r)$ such that $0$ is a tame interior point for both $M_1$ and $M_2$.
\begin{enumerate}
\item The collection of mutual tangent cones $\Cc$ at $0$ is the  collection of all pairs 
  $(\Sigma_1,\Sigma_2)$ such that there is a sequence $\lambda_j\to\infty$
for which $\lambda_j M_a$ converges to $\Sigma_a$ for $a=1,2$. 
\item We say $M_1$ and $M_2$ curve oppositely at $0$, if $h\ne 0$ 
and if $\Cc$ has an element of the form $(\Sigma,\Sigma)$ such that the orientations of $\reg \Sigma $ induced from the mean curvature vectors of $\reg M_1$ and $\reg M_2$ are inconsistent.     
\end{enumerate}
\end{definition}

\begin{remark}\label{connected-remark}
If $\Sigma$ is a stationary $n$-cone whose singular set has $\Hh^{n-1}$ measure $0$, then $\reg \Sigma$ is connected 
by Theorem~\ref{brendle-connected} below. (Theorem~\ref{brendle-connected} is about
shrinkers, but it applies to stationary cones since 
every stationary cone is a shrinker). 
Thus in Definition~\ref{def_opp_curv}, two orientations on $\reg\Sigma$ are inconsistent (i.e., opposite at some points) if and only if they are opposite everywhere.
\end{remark}

To prove Theorem \ref{mirror-theorem_restated}, we first recall two Bernstein-type theorems for varifold shrinkers in a halfspace
 that were proved in our prior work~\cite[Sec. 3.2]{CHHW}. These theorems generalize important results for smooth two-dimensional surfaces by Brendle \cite{Brendle}.

\begin{definition}[{varifold shrinker, \cite[Def. 3.5]{CHHW}}]\label{def_sing_shrinker}
A \emph{varifold shrinker} is an integral $n$-varifold $\Sigma$ in $\mathbb{R}^{n+1}$ with finite entropy that is stationary with respect to the functional
\begin{equation}
F[\Sigma]=\int e^{-|x|^2/4}d\mu_\Sigma\, .
\end{equation}
\end{definition}

Essentially, the Bernstein-type theorems say that the only varifold shrinkers that are stable in a halfspace (respectively don't intersect in a halfspace) are flat planes. The precise statements are as follows:\footnote{In \cite[Thm. 3.8]{CHHW} a sharper result has been established, but the simplified version here is sufficient for our purpose.}

\begin{theorem}[{First Bernstein-type theorem \cite[Thm. 3.8]{CHHW}}]\label{brendle-stable}
Let $\Sigma$ be an $n$-dimensional varifold shrinker
 with multiplicity one in $\mathbb{R}^{n+1}$ such
  that $\Hh^{n-1}(\sing \Sigma\cap \mathbb{H})=0$.
 If $\reg \Sigma\cap \mathbb{H}$ is stable for the $F$-functional, then $\Sigma$ is a multiplicity one hyperplane.
\end{theorem}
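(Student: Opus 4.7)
The plan is to adapt the classical Brendle-type argument for smooth stable self-shrinkers to this varifold setting, with the hypothesis $\Hh^{n-1}(\sing \Sigma \cap \mathbb{H}) = 0$ serving precisely to justify the cutoff manipulations around singularities. First I would record the $F$-stability inequality on $\reg \Sigma \cap \mathbb{H}$: for every compactly supported Lipschitz $\varphi$,
\[
\int_{\reg \Sigma \cap \mathbb{H}} \bigl( |A|^2 + \tfrac{1}{2} \bigr)\varphi^2\, e^{-|x|^2/4}\, d\mu_\Sigma \;\leq\; \int_{\reg \Sigma \cap \mathbb{H}} |\nabla \varphi|^2\, e^{-|x|^2/4}\, d\mu_\Sigma,
\]
which expresses nonnegativity of the stability operator $L = \cL + |A|^2 + \tfrac{1}{2}$ with drift Laplacian $\cL = \Delta - \tfrac{1}{2}\langle x, \nabla\rangle$. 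Combining this with the Colding--Minicozzi form of Simons' identity $\cL |A|^2 = 2|\nabla A|^2 - 2|A|^4 + |A|^2$ and the Kato inequality $|\nabla A|^2 \geq (1 + \tfrac{2}{n})|\nabla |A||^2$, testing stability with $\varphi = |A|^p \eta$ for a suitable $p \in (0,1)$ and absorbing the quartic term should produce a reverse-Poincar\'e estimate of the shape
\[
\int_{\reg \Sigma \cap \mathbb{H}} |A|^{2p+2}\, \eta^2\, e^{-|x|^2/4}\, d\mu_\Sigma \;\leq\; C \int_{\reg \Sigma \cap \mathbb{H}} |A|^{2p}\, |\nabla \eta|^2\, e^{-|x|^2/4}\, d\mu_\Sigma.
\]

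The next step is to drive $\eta$ to the constant $1$. I would take $\eta = \eta_\eps^{\mathrm{sing}} \cdot \eta_\eps^{\partial}$, where $\eta_\eps^{\partial}$ is a standard cutoff vanishing near $\partial \mathbb{H}$ and near infinity (the latter being harmless because $\Sigma$ has finite entropy), and $\eta_\eps^{\mathrm{sing}}$ vanishes on an $\eps$-neighborhood of $\sing \Sigma \cap \mathbb{H}$. The assumption $\Hh^{n-1}(\sing \Sigma \cap \mathbb{H}) = 0$ is exactly what permits a covering-plus-logarithmic-cutoff construction yielding $\eta_\eps^{\mathrm{sing}}$ with $\int |\nabla \eta_\eps^{\mathrm{sing}}|^2\, e^{-|x|^2/4}\, d\mu_\Sigma \to 0$, using the fact that sets of vanishing $\Hh^{n-1}$-measure can be avoided by a sequence of cutoffs of arbitrarily small $n$-dimensional Dirichlet energy. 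Sending $\eps \to 0$ and then enlarging $\eta_\eps^{\partial}$ forces $|A| \equiv 0$ on $\reg \Sigma \cap \mathbb{H}$.

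Finally, vanishing second fundamental form on $\reg \Sigma \cap \mathbb{H}$ means this regular set is a union of hyperplane pieces through the origin (by the shrinker equation applied to a flat piece). Two distinct such hyperplanes would meet in an $(n-1)$-plane through $0$, producing singular points of positive $\Hh^{n-1}$-measure inside $\mathbb{H}$ and contradicting the assumption, so $\reg \Sigma \cap \mathbb{H}$ lies in a single hyperplane through $0$. Allard regularity then sweeps away $\sing \Sigma \cap \mathbb{H}$; the constancy theorem together with multiplicity one identifies $\Sigma \cap \mathbb{H}$ with that hyperplane; and unique continuation for the shrinker equation propagates this to all of $\Sigma$. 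The main obstacle, as previewed above, is calibrating the exponent $p$, the Kato absorption, and the capacity bound for $\sing \Sigma \cap \mathbb{H}$ so that every error term in the test-function argument dies in the limit; this is the only delicate quantitative piece of the proof.
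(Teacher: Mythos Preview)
This theorem is not proved in the present paper; it is quoted from \cite[Thm.~3.8]{CHHW}, so there is no proof here to compare against directly. I will therefore evaluate your argument on its own terms.

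The overall architecture is reasonable, and the endgame (flat pieces through the origin, ruling out two distinct planes via the $\Hh^{n-1}$ hypothesis, then Allard and unique continuation) is fine. However, the heart of your argument has a genuine gap: the capacity claim. You assert that $\Hh^{n-1}(\sing\Sigma\cap\mathbb{H})=0$ permits cutoffs $\eta_\eps^{\mathrm{sing}}$ with $\int |\nabla\eta_\eps^{\mathrm{sing}}|^2\,e^{-|x|^2/4}\,d\mu_\Sigma\to 0$. This is false in general. On an $n$-dimensional surface, the condition for a compact set $K$ to have vanishing $2$-capacity is essentially $\Hh^{n-2}(K)=0$, not $\Hh^{n-1}(K)=0$. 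Concretely: covering $K$ by balls of radii $r_i$ with $\sum r_i^{n-1}<\delta$ gives cutoffs whose Dirichlet energy scales like $\sum r_i^{n-2}$, which you do not control. A Cantor-type set of Hausdorff dimension strictly between $n-2$ and $n-1$ furnishes a counterexample. The logarithmic cutoff you invoke buys one codimension, not two.

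The same issue afflicts the boundary cutoff $\eta_\eps^{\partial}$: the set $\Sigma\cap\partial\mathbb{H}$ is generically $(n-1)$-dimensional, so cutoffs vanishing near it have Dirichlet energy blowing up like $1/\eps$. Weighting by $|A|^{2p}$ on the right-hand side does not help, since $|A|$ is bounded below away from zero on any region where you have not yet proved flatness, and may blow up near $\sing\Sigma$.

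To close these gaps one needs an additional ingredient. The route taken in \cite{CHHW} is to first invoke the regularity theory for stable codimension-one varifolds (in the spirit of Schoen--Simon and Wickramasekera): multiplicity one together with $\Hh^{n-1}(\sing\Sigma\cap\mathbb{H})=0$ excludes classical (triple-junction-type) singularities, and stability then forces $\dim(\sing\Sigma\cap\mathbb{H})\le n-7$. Once the singular set has codimension at least two, the capacity argument you outline becomes legitimate, and the remainder of your proof goes through. Without that regularity upgrade, the reverse-Poincar\'e step cannot be closed.
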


\begin{theorem}[{Second Bernstein-type theorem \cite[Thm. 3.11, Cor. 3.12, Cor. 3.14]{CHHW}}]
\label{brendle-connected}
For $a=1,2$ let $\Sigma_a$ be the support of an $n$-dimensional varifold shrinker
in $\mathbb{R}^{n+1}$ with $\Hh^{n-1}(\sing \Sigma_a)=0$. 
Then $\reg\Sigma_a$ is connected and $\reg \Sigma_a\cap \mathbb{H}$
is connected. 
Furthermore, if $\reg \Sigma_1\cap \mathbb{H}$ and $\reg \Sigma_2\cap \mathbb{H}$ are nonempty and do not
intersect transversely at any point, then either
\begin{enumerate}
\item $\Sigma_1=\Sigma_2$, or
\item $\Sigma_1$ and $\Sigma_2$ are flat planes.
\end{enumerate}
\end{theorem}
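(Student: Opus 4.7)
The plan is to split into two cases according to whether $\reg \Sigma_1$ and $\reg \Sigma_2$ meet inside $\mathbb{H}$. The intersecting case is handled by the strong maximum principle for the shrinker equation plus a connectedness/propagation argument, while the disjoint case is reduced to the first Bernstein-type theorem (Theorem \ref{brendle-stable}) via construction of a positive $F$-Jacobi field.

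\emph{Case 1: $\reg \Sigma_1\cap \reg \Sigma_2 \cap \mathbb{H}\neq\emptyset$.} At any such intersection point $p$, the non-transversality hypothesis forces $\Sigma_1$ and $\Sigma_2$ to share a tangent plane $T$ at $p$. Locally near $p$, both hypersurfaces are graphs $u_a:B\subset T\to T^\perp$ of smooth functions satisfying the same quasilinear shrinker equation. After choosing orientations, $u_1-u_2$ does not change sign on $B$ and satisfies a linear homogeneous second order elliptic equation, so the strong maximum principle forces $u_1\equiv u_2$ near $p$. Consequently, the set of points in $\reg\Sigma_1\cap\reg\Sigma_2$ around which $\Sigma_1$ and $\Sigma_2$ coincide is open in $\reg\Sigma_1$. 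It is also closed in $\reg\Sigma_1$, because any sequential limit is again a point of tangential contact of the regular parts, and the same local argument applies. The hypothesis $\Hh^{n-1}(\sing\Sigma_a)=0$ ensures that $\reg\Sigma_a$ is connected (a singular set of codimension larger than one cannot disconnect a rectifiable hypersurface). Unique continuation for the shrinker equation then propagates the equality to all of $\reg\Sigma_1$, yielding $\Sigma_1=\Sigma_2$ as supports.

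\emph{Case 2: $\reg \Sigma_1\cap \reg \Sigma_2\cap \mathbb{H}=\emptyset$.} Here $\reg\Sigma_1\cap\mathbb{H}$ and $\reg\Sigma_2\cap\mathbb{H}$ are disjoint. The plan is to exhibit a strictly positive $F$-Jacobi field on $\reg\Sigma_1\cap\mathbb{H}$ by comparison with $\Sigma_2$: near each $p\in\reg\Sigma_1\cap\mathbb{H}$, the signed normal displacement from $\Sigma_1$ to the nearest sheet of $\Sigma_2$ is strictly positive and, since both hypersurfaces satisfy the shrinker equation, it is a supersolution of the $F$-Jacobi operator. A covering argument using $\Hh^{n-1}(\sing\Sigma_a)=0$ patches these local positive functions into a globally defined positive $F$-Jacobi field on $\reg\Sigma_1\cap\mathbb{H}$. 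The existence of such a positive solution is equivalent to $F$-stability of $\reg\Sigma_1\cap\mathbb{H}$. Theorem \ref{brendle-stable} then forces $\Sigma_1$ to be a multiplicity-one hyperplane. Exchanging the roles of $\Sigma_1$ and $\Sigma_2$ yields that $\Sigma_2$ is also a multiplicity-one hyperplane, completing conclusion~(2).

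\emph{Main obstacle.} The principal technical difficulty, in both cases, lies in handling the singular sets $\sing\Sigma_a$. In Case~1, the propagation of $\{\Sigma_1=\Sigma_2\}$ must traverse possibly complicated (but $(n-1)$-negligible) singular strata; the hypothesis $\Hh^{n-1}(\sing\Sigma_a)=0$ is precisely what is needed to ensure that $\reg\Sigma_a$ is connected and that singular points do not obstruct unique continuation. In Case~2, gluing the local positive comparison functions into one globally defined positive $F$-Jacobi field across $\sing\Sigma_a$, and verifying that the resulting stability is strong enough to invoke Theorem \ref{brendle-stable}, is the most delicate step. Both obstacles are tailored exactly to the smallness of the singular set that appears in the hypothesis.
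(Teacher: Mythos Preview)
First, note that this theorem is not proved in the present paper; it is quoted from the authors' prior work \cite{CHHW} (specifically Thm.~3.11, Cor.~3.12, Cor.~3.14 there). So there is no proof here to compare against, and I evaluate your proposal on its own merits.

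Your Case~1 is on the right track, but the key assertion ``$u_1-u_2$ does not change sign on $B$'' is precisely what needs justification and you have not supplied it. The correct argument goes via the structure of nodal sets for solutions of second-order elliptic equations: if $w=u_1-u_2$ is not identically zero near $p$, its nodal set contains points where $\nabla w\neq 0$, and these are transverse intersection points of $\Sigma_1$ and $\Sigma_2$ in $\mathbb{H}$, contradicting the hypothesis; hence $w\equiv 0$ near $p$. (The strong maximum principle alone, without a sign hypothesis already in hand, does not give this.) Your closedness step is also loose: a limit point in $\reg\Sigma_1$ could lie in $\sing\Sigma_2$, so ``the same local argument applies'' is not immediate; this can be repaired using $\Hh^{n-1}(\sing\Sigma_2)=0$ and the fact that such a small closed set cannot disconnect an $n$-manifold.

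Your Case~2 has a genuine gap. Disjointness of $\reg\Sigma_1\cap\mathbb{H}$ and $\reg\Sigma_2\cap\mathbb{H}$ does not produce a positive $F$-Jacobi field on $\reg\Sigma_1\cap\mathbb{H}$ by the mechanism you sketch. The ``signed normal displacement from $\Sigma_1$ to the nearest sheet of $\Sigma_2$'' is not globally well-defined: $\Sigma_2$ need not be graphical over $\Sigma_1$, need not lie on one side of $\Sigma_1$, and may be arbitrarily far from portions of $\Sigma_1$. Even where it is locally defined, it is not a supersolution of the $F$-Jacobi operator in any direct sense. Most seriously, ``a covering argument \ldots patches these local positive functions into a globally defined positive $F$-Jacobi field'' is not a valid step: independently chosen local functions do not agree on overlaps and cannot be glued into a single solution of a PDE. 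You correctly flag this as the most delicate point, but what you have written is an aspiration, not an argument. The missing ingredient is an honest mechanism --- e.g.\ a Frankel-type intersection argument for $F$-minimal hypersurfaces in the halfspace --- that either rules out the disjoint case directly or genuinely produces the stability needed to invoke Theorem~\ref{brendle-stable}.
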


We can now prove the main theorem of this section:

\begin{proof}[{Proof of Theorem \ref{mirror-theorem_restated}}] Consider the collection $\Cc$ of mutual tangent cones at the origin (see Definition \ref{def_opp_curv}), and suppose that $(\Sigma_1,\Sigma_2)\in \Cc$.  

\begin{claim*} $(\reg \Sigma_i) \cap \mathbb{H}$ is nonempty.
\end{claim*}

\newcommand{\BB}{\mathbf{B}}

\begin{proof}[Proof of claim]
Suppose, to the contrary, that $(\reg\Sigma_i)\cap \mathbb{H}$ is empty.
Since $\reg\Sigma_i$ is dense in $\Sigma_i$ (by tameness), it follows
that 
\begin{equation}\label{emptiness}
\Sigma_i\cap \mathbb{H} = \emptyset.
\end{equation} 
Let $v$ be the unit normal to $\partial \mathbb{H}$ pointing into $\mathbb{H}$.
Since $V_i$ is a minimal cone with multiplicity $1$ almost everywhere, 
we have, by the divergence theorem,
\begin{equation}\label{zephyr}
\int_{x\in \Sigma_i\cap \partial \BB} x\cdot v\, d\Hh^{n-1}
=
\int_{\Sigma_i\cap \BB} H\cdot v \,d\Hh^n
+ 
\int_{\Sigma_i\cap \BB} \Div_{\Sigma_i}v \,d\Hh^n 
= 0,
\end{equation}
where $\BB$ is the unit ball in $\RR^{n+1}$.
By~\eqref{emptiness}, 
$x\cdot v \le 0$ for
all $x\in \Sigma_i$.  
Thus by~\eqref{zephyr}, $x\cdot v= 0$ for $\Hh^{n-1}$-almost every 
  $x\in \Sigma_i\cap\partial \BB$.
The regular points of $\Sigma_i$ are dense in $\Sigma_i$, so $x\cdot v=0$
everywhere in $\Sigma_i\cap\BB$.
Thus $\Sigma_i$ is the plane $\partial \mathbb{H}$
with some multiplicity.  By tameness, the multiplicity is $1$.
By the Allard Regularity Theorem, $0$ is a regular point of $\Sigma_i$, and thus
$\partial \mathbb{H}$ is the unique tangent plane to $\Sigma_i$ at $0$.
But this violates Hypothesis~\ref{hopf-hypothesis-ii} of the theorem, and thus
the claim is proved.
\end{proof}

Observe that
\begin{equation}\label{transverse}
\text{$\reg \Sigma_1\cap \mathbb{H}$ and $\reg \Sigma_2\cap \mathbb{H}$ do not
intersect transversely at any point},
\end{equation}
since otherwise $\reg M_1\cap\mathbb{H}$ and $\reg M_2\cap\mathbb{H}$ would intersect,
 contrary to Hypothesis~\ref{hopf-hypothesis-iii}. 
 Thus, by Theorem~\ref{brendle-connected} (Second Bernstein-type theorem), either $\Sigma_1$ and $\Sigma_2$ are distinct planes
or $\Sigma_1=\Sigma_2$.  In the first case, we are done, so (in our argument by contradiction) we can assume from now on that
\begin{equation}\label{same}
\Sigma_1=\Sigma_2 \quad\text{for all $(\Sigma_1,\Sigma_2)\in\Cc$}.
\end{equation}
If $h\ne 0$ and if there is any $(\Sigma, \Sigma)\in \Cc$ for which the orientations
on $\Sigma$ are inconsistent, then we are done.   
Thus (to prove the theorem by contradiction) 
 from now on we  assume that (i) $h=0$, or (ii)  $h\ne 0$
and the orientations on $\Sigma$ are consistent for every $(\Sigma, \Sigma)\in \Cc$.

Suppose that $(\Sigma,\Sigma)\in\Cc$. 
We now apply Theorem~\ref{brendle-connected} 
to the shrinkers $\Sigma$ and $\partial B(0,\sqrt{2n})$.
They are not equal since $0$ is in $\Sigma$ but not in $\partial B(0,\sqrt{2n})$.
Also, $\partial B(0,\sqrt{2n})$ is not planar.
Thus by Theorem~\ref{brendle-connected},
there is a point where $\reg\Sigma\cap\mathbb{H}$ and 
 $\partial B(0,\sqrt{2n})$ intersect transversely.
 Consequently,
\begin{equation}\label{2-ball}
    \reg \Sigma \cap B(0,\sqrt{2n}) \cap \mathbb{H}\ne \emptyset.
\end{equation}

We now prove existence of a nontrivial Jacobi field. To this end, we will first locate regions of approximately maximal regularity scale. 
For any set $S\subseteq\mathbb{H}$ and any $p\in S$, denote by $R(S,p)$ be the regularity scale of $S$ at $p$ in $\mathbb{H}$, i. e., the supremum of $r\geq 0$ such that 
\begin{enumerate}
\item $B(p,r)\subseteq \mathbb{H}$, 
\item $S\cap B(p,r)$ is a smooth $n$-dimensional manifold\ (without boundary in $B(p,r)$) properly
embedded in $B(p,r)$, and
\item the norm of the second fundamental form at each point of $S\cap B(p,r)$ is $\le 1/r$.
\end{enumerate}
 We consider the quantity
\begin{equation}
   \rho(S) := \{ \sup R(S\cap\mathbb{H} , p):  p\in S\cap\mathbb{H}, \, |p|\le \sqrt{2n}\}.
\end{equation}

\begin{claim}[lower bound for regularity scale]\label{claim_reg} For $a=1,2$ we have
\begin{equation}
\eta_a:=\liminf_{\lambda \to\infty} \rho(\lambda M_a)>0.
\end{equation}
\end{claim}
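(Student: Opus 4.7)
I would argue by contradiction: suppose $\eta_a = 0$ for some $a \in \{1,2\}$, so that there exists a sequence $\lambda_i \to \infty$ with $\rho(\lambda_i M_a) \to 0$.

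The first step is to extract a mutual tangent cone. By Allard's compactness theorem, after passing to a subsequence, $\lambda_i M_1 \to \Sigma_1$ and $\lambda_i M_2 \to \Sigma_2$ as varifolds. The pair $(\Sigma_1,\Sigma_2)$ then lies in $\Cc$, and the standing reduction \eqref{same} forces $\Sigma_1 = \Sigma_2 =: \Sigma$. By \eqref{2-ball} I can therefore pick a point $p_0 \in \reg \Sigma \cap B(0,\sqrt{2n}) \cap \mathbb{H}$.

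Since $\mathbb{H}$ and $B(0,\sqrt{2n})$ are open and $p_0$ is a regular point of $\Sigma$, there exists $r_0 > 0$ with $B(p_0,r_0) \subset B(0,\sqrt{2n}) \cap \mathbb{H}$ on which $\Sigma$ is a smooth $n$-manifold (with some integer multiplicity $k$). The second step is to transfer this regularity to the approximating sequence. By Allard's regularity theorem when $k=1$, or by its sheeting version when $k>1$, the varifold convergence $\lambda_i M_a \to \Sigma$ upgrades to smooth (multigraph) convergence on $B(p_0, r_0/2)$, with second fundamental form bounded uniformly in $i$. Consequently, for $i$ large there is a point $p_i \in \lambda_i M_a \cap B(p_0,r_0/2) \subset \mathbb{H} \cap B(0,\sqrt{2n})$ at which $R(\lambda_i M_a \cap \mathbb{H},\, p_i) \ge c$ for some constant $c>0$ independent of $i$. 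Hence $\rho(\lambda_i M_a) \ge c$, contradicting $\rho(\lambda_i M_a) \to 0$.

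The main obstacle is precisely the last step, the upgrade from varifold convergence to smooth convergence at a regular point of the limit tangent cone. When $\Sigma$ has multiplicity one at $p_0$ this is Allard's regularity theorem, but in the higher multiplicity case one needs a sheeting statement ensuring that each approximating sheet inherits uniform curvature bounds. Everything else---the double extraction giving $(\Sigma_1,\Sigma_2)\in\Cc$, the identification $\Sigma_1=\Sigma_2$ from \eqref{same}, and the existence of $p_0$ from \eqref{2-ball}---is already in place from the earlier steps in the proof of Theorem \ref{mirror-theorem_restated}.
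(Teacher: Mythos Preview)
Your argument is correct and follows essentially the same route as the paper: extract a subsequential tangent cone, invoke \eqref{2-ball} to locate a regular point in $B(0,\sqrt{2n})\cap\mathbb{H}$, and use Allard's regularity theorem to upgrade the varifold convergence to smooth convergence there, yielding a uniform lower bound on the regularity scale. The paper phrases this directly rather than by contradiction, and only extracts the limit of $\lambda_j M_a$ (not both simultaneously), but these are cosmetic differences.

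One remark on what you call ``the main obstacle'': the higher-multiplicity sheeting issue does not actually arise here. Hypothesis~\ref{hopf-hypothesis-i} says $0$ is a \emph{tame} point, which by Definition~\ref{def_tame} means every tangent cone has multiplicity one on its regular part. Thus $k=1$ at your point $p_0$, and ordinary Allard regularity suffices; no sheeting lemma is needed.
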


\begin{proof}[{Proof of Claim \ref{claim_reg}}]
Choose $\lambda_j\to\infty$ so that $\rho(\lambda_j M_1)\to \eta_1$.
By passing to a subsequence, we can assume that $\lambda_j M_1$ converges to a limit $\Sigma$.
By~\eqref{2-ball},
\begin{equation}
   \rho(\Sigma)>0.
\end{equation}
By smooth convergence of $\lambda_jM_1$ to $\Sigma$ at the regular points, which follows from Allard's regularity theorem \cite{Allard}, we get
\begin{equation}
   \lim_{j\to\infty} \rho(\lambda_j M_1) = \rho(\Sigma).
\end{equation}
Thus $\eta_1>0$. Likewise, $\eta_2>0$. This proves the claim.
\end{proof}

Continuing the proof of the theorem, set\footnote{Using~\eqref{same}, it is not hard to show that $\eta_1=\eta_2$, but we do not need that fact.}
\begin{equation}
  \eta:=\min\{\eta_1,\eta_2\}.
\end{equation}

Choose $\Lambda<\infty$ so that $\rho(\lambda M_a) > \eta/2$ for all $\lambda \ge \Lambda$.

For $\lambda \ge \Lambda$, consider the quantity
\begin{equation}
    \psi(\lambda) := \sup \left\{ \dist(x, \lambda M_2): \text{$x\in  \lambda M_1\cap\mathbb{H}$, $|x|\le \sqrt{2n}$, and $R( \lambda M_1\cap\mathbb{H}, x)\ge \eta/2$} \right\}.
\end{equation}
Note that the supremum will be attained at some (not necessarily unique) point  $x_\lambda$.
By~\eqref{same} we have
\begin{equation}
\lim_{\lambda \to\infty}\psi(\lambda)=0.
\end{equation}
Choose $\lambda_j\ge j$ so that
\begin{equation}\label{eq_point_selection}
     \psi(\lambda_j) \ge (1-j^{-1}) \sup_{\lambda \ge \lambda_j} \psi(\lambda).
\end{equation}

By passing to a subsequence, we can assume that $\lambda_jM_1$ converges to a limit $\Sigma$ 
and that $x_{\lambda_j}$ converges to a point $x\in  \reg \Sigma\cap\mathbb{H}$ (in particular, observe that by definition of the regularity scale the point $x$ is at definite distance from $\partial\mathbb{H}$).  
Again by~\eqref{same}, the blowup sequence $\lambda_j M_2$ also converges to $\Sigma$. Note that the convergence is smooth on compact subsets of $\mathbb{R}^{n+1}\setminus\sing \Sigma$.

Choose open subsets $\Omega_j$ of $\mathbb{H} \cap\reg \Sigma$ such that
\begin{gather*}
x\in \Omega_1 \subset \Omega_2 \subset \dots, \\ 
\Omega_j\subset\subset \reg \Sigma\cap\mathbb{H}, \\
\cup_j\Omega_j=\reg \Sigma\cap\mathbb{H}.
\end{gather*}
Since $\reg \Sigma\cap \mathbb{H}$ is connected
 (by Theorem~\ref{brendle-connected} and Remark~\ref{connected-remark}), we can choose the $\Omega_j$ to be connected.

Let $\nu$ be a unit normal vectorfield on $\reg \Sigma$.  
By the smooth convergence of $\lambda_jM_a$ on the regular portion of $\Sigma$, 
we can, by passing to a subsequence, assume that for $a=1,2$
there are functions 
\begin{equation}
  u^a_j: \Omega_j   \to \mathbb{R}
\end{equation}
such that
\begin{align}
   \{ p + u^a_j(p)\nu(p): p\in \Omega_j\} &\subset \lambda_jM_a,
\end{align}
and such that $u^a_j$ converges to $0$ smoothly on compact
subsets of 
\begin{equation}
 \reg \Sigma\cap \mathbb{H}.
\end{equation}

By relabelling, we may assume that $u^2_j>u^1_j$ on $\Omega_j$. Then, by the Harnack inequality,  and since $M_1$ and $M_2$ are consistently oriented at $0$ in the CMC case, the renormalized sequence
\begin{equation}
   \frac{u^2_j - u^1_j}{ u^2_j(x_{\lambda_j}) - u^1_j(x_{\lambda_j})}
\end{equation}
converges smoothly (perhaps after passing to a further subsequence) to a positive solution
\begin{equation}
    u:  \reg \Sigma \cap \mathbb{H} \to \mathbb{R}_+
\end{equation}
of the linearization of the minimal surface equation
\begin{equation}
\Delta u +|A|^2u=0.
\end{equation}

Moreover, by construction,
\begin{equation}
   u(x)=1,
\end{equation}
and, thanks to \eqref{eq_point_selection}, we have
\begin{equation}\label{growth_cone}
  |u(\lambda p)|\le \lambda \quad\text{for all $\lambda \in (0,1)$ and all $p\in B(0,\sqrt{2n})\cap \mathbb{H}$ with $R( \Sigma\cap\mathbb{H},p)> \eta/2$}.
\end{equation}

\begin{claim}[stability]\label{claim_stability} $ \reg \Sigma\cap \mathbb{H}$ is a stable critical point of the $F$-functional.
\end{claim}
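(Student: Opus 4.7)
\textbf{Proof plan for Claim~\ref{claim_stability}.} The plan is to establish $F$-stability of $\Omega := \reg\Sigma\cap\mathbb{H}$ by a Fischer-Colbrie-Schoen-type argument, using the positive Jacobi field $u:\Omega\to\mathbb{R}_+$ constructed above. A key observation is that $\Sigma$, being the limit of $\lambda_j M_a$ as $\lambda_j\to\infty$ with mean curvatures $h/\lambda_j\to 0$, is a minimal cone; in particular, $x$ is tangent to $\Sigma$ at every regular point, so $x^\top = x$.

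For any test function $\varphi\in C_c^\infty(\Omega)$, I would substitute $\varphi = u\psi$ with $\psi = \varphi/u$ (well-defined by the positivity of $u$). A standard integration by parts using the Jacobi equation $\Delta u + |A|^2 u = 0$ and the cone identity $x^\top = x$ converts the $F$-stability quadratic form into
\begin{equation*}
\int_\Omega \bigl(|\nabla\varphi|^2 - (|A|^2 + \tfrac{1}{2})\varphi^2\bigr)e^{-|x|^2/4}\,d\mu_\Sigma = \int_\Omega u^2|\nabla\psi|^2 e^{-|x|^2/4}\,d\mu_\Sigma + \tfrac{1}{2}\int_\Omega \psi^2 u\bigl(\langle x,\nabla u\rangle - u\bigr)e^{-|x|^2/4}\,d\mu_\Sigma.
\end{equation*}
The first term on the right is manifestly non-negative. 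To control the error term, I would exploit the growth bound~\eqref{growth_cone}: the rescaled functions $u_\lambda(p) := u(\lambda p)/\lambda$ are uniformly bounded above by $1$ on the relevant region. Extracting a subsequential limit $\tilde u(p) := \lim_{\lambda\to 0^+} u_\lambda(p)$ would yield a degree-one homogeneous positive Jacobi field, and Euler's identity $\langle x,\nabla\tilde u\rangle = \tilde u$ would then cause the error term to vanish when $\tilde u$ replaces $u$ in the substitution.

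The main obstacle is to verify that the rescaling limit $\tilde u$ is nontrivially positive (and not identically zero), which requires a quantitative lower bound on $u_\lambda$ on compact subsets of $\Omega$ as $\lambda\to 0^+$. This should follow from a Harnack inequality applied to positive solutions of the Jacobi equation, combined with the normalization $u(x) = 1$ at the limit point $x$ of $x_{\lambda_j}$. An alternative route, avoiding the rescaling limit altogether, is to establish the integrated inequality $\int_\Omega \psi^2 u(\langle x,\nabla u\rangle - u)e^{-|x|^2/4}\,d\mu_\Sigma \ge 0$ directly, using the near-maximality built into the point selection~\eqref{eq_point_selection} together with integration by parts against the position vector $x$ on the cone.
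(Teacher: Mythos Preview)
Your integration-by-parts identity is correct, and you have correctly isolated the crux: the error term $\tfrac{1}{2}\int \psi^2 u(\langle x,\nabla u\rangle - u)e^{-|x|^2/4}$ has no evident sign. But your proposed resolution does not close. Harnack gives compactness of the family $u_\lambda(p)=u(\lambda p)/\lambda$ (they are positive Jacobi fields bounded by $1$ on a fixed open set), yet it does \emph{not} prevent $u_\lambda\to 0$: the normalization $u(x)=1$ is at the fixed point $x$, whereas $u_\lambda(x)=u(\lambda x)/\lambda$ may well tend to zero if $u$ vanishes superlinearly at the vertex, and nothing in~\eqref{growth_cone} excludes this. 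Moreover, degree-one homogeneity of a subsequential limit $\tilde u$ needs the full limit along all $\lambda\to 0$, which you have not established. Your ``alternative route'' is not spelled out, and I do not see how near-maximality in~\eqref{eq_point_selection} yields the integrated sign you want.

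The paper bypasses this obstacle with a parabolic reformulation. Set $w(x,t):=e^{t/2}u(e^{-t/2}x)$, i.e.\ $w(\cdot,t)=u_{e^{-t/2}}$. Because $\Sigma$ is a cone and $Lu=0$, one checks $Lw=0$ in $x$ and, crucially, $\partial_t w=\mathcal{L}w$, where $\mathcal{L}=\Delta+|A|^2-\tfrac12 x^\top\!\cdot\nabla+\tfrac12$ is the $F$-Jacobi operator. The growth bound~\eqref{growth_cone} says precisely that $w(\cdot,t)\le 1$ on a fixed nonempty open set for all $t\ge 0$; parabolic Harnack then gives $\sup_{K\times[1,\infty)}w<\infty$ for every $K\subset\subset\reg\Sigma\cap\mathbb{H}$. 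Now on any precompact $U$, let $f>0$ be the first Dirichlet eigenfunction of $\mathcal{L}$ with eigenvalue $\lambda$, scaled so that $f\le w(\cdot,0)$; the parabolic maximum principle gives $e^{-\lambda t}f\le w(\cdot,t)$, and boundedness of $w$ forces $\lambda\ge 0$. This is stability. The key point is that only the \emph{upper} bound on $w$ is used---no positive lower bound on $u_\lambda$, and no homogeneous limit, are needed.
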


\begin{proof}[{Proof of Claim \ref{claim_stability}}]
Since $\Sigma$ is a minimal cone, it is a critical point of the area functional as well as a critical point of the $F$-functional. Regarding the second variation, recall that the Jacobi operator for the area functional is
\begin{equation}
L=\Delta + |A|^2,
\end{equation}
and the Jacobi operator for the $F$-functional is
\begin{equation}
\mathcal{L}=\Delta + |A|^2-\tfrac{1}{2}x^\top \cdot \nabla + \tfrac{1}{2}.
\end{equation}
The existence of a positive function $u$ in the kernel of $L$ immediately implies stability with respect to the area functional, see e.g. \cite{FischerColbrieSchoen}, but does not directly yield stability with respect to the $F$-functional. To establish the latter, we consider the function 
\begin{equation}
w(x,t)=e^{t/2}u(e^{-t/2}x),
\end{equation}
where $x\in \reg\Sigma\cap\mathbb{H}$ and $t\geq 0$. Note that this is well defined, since $\reg\Sigma\cap\mathbb{H}$ is a cone. The function $w$ satisfies
\begin{equation}
Lw=0,
\end{equation}
and
\begin{equation}
\partial_t w =\tfrac{1}{2}w -\tfrac{1}{2}x^\top \cdot \nabla w,
\end{equation}
hence in particular
\begin{equation}
\partial _{t} w=\mathcal{L}w.
\end{equation}
Moreover, the growth condition \eqref{growth_cone} implies that 
\begin{equation}\label{boundforw}
  |w(x,t)|\le 1 \quad\text{for all $t\ge 0$ and all $x\in B(0,\sqrt{2n})\cap \mathbb{H}$ with $R( \Sigma\cap \mathbb{H},x)> \eta/2$}.
\end{equation}
By the parabolic Harnack inequality this implies 
\begin{equation}\label{harnack-bound}
    \sup_{ K\times [1,\infty)} w < \infty \quad\text{for $K\subset\subset  \reg \Sigma\cap \mathbb{H}$}.
\end{equation}

Let $U\subset\subset \reg\Sigma\cap \mathbb{H}$ be a connected open set. Let $\lambda$ be the first Dirichlet eigenvalue of $\mathcal{L}$ on $U$, and let $f$ be the corresponding eigenfunction.
Then $f$ is nonzero at all points of $U$. By multiplying by a constant, we can assume that $0< f\le w(\cdot,0)$.
Thus, by the maximum principle, we get
\begin{equation}
    e^{-\lambda t} f(\cdot) \le w(\cdot,t) 
\end{equation}
for all $t\ge 0$. 
Together with \eqref{harnack-bound} this yields that $\lambda\geq 0$. Since $U$ was arbitrary, this proves the claim.
\end{proof}

Thus, by Theorem~\ref{brendle-stable} (First Bernstein-type theorem), $\Sigma$ is a flat plane. Together with the tameness assumption and Allard's regularity theorem~\cite{Allard}, this implies that $M_1$ and $M_2$ are smooth at $0$. 
Hence, the classical Hopf lemma in the smooth setting gives $\Sigma_1\neq \Sigma_2$, contradicting our assumption~\eqref{same}. 
This concludes the proof of the theorem.
\end{proof}

\bigskip

\section{Moving planes for compact varifolds with smooth boundary}\label{sec_boundary}

In this section, we implement our version of the moving plane method in the setting of compact varifolds with smooth boundary. Theorem \ref{main_boundary_theorem} and Theorem \ref{main_boundary_CMC_theorem} are proved simultaneously in Theorem \ref{main_boundary_combined} and Corollary \ref{full_sym_cor} below.

\begin{theorem}[compact varifolds with smooth boundary]\label{main_boundary_combined}
Suppose $\Gamma=\Gamma_1\cup\Gamma_2$, where $\Gamma_1$
 and $\Gamma_2$
are smooth, closed, strictly convex $(n-1)$-dimensional surfaces in parallel $n$-planes $Q_1$ and $Q_2$
 in $\RR^{n+1}$.
Suppose that $M$ is a compact connected integral $n$-varifold that has constant
mean curvature $h$ in $\RR^{n+1}\setminus(\Gamma_1\cup\Gamma_2)$,
and suppose that 
the density of $M$ is $1/2$ at each point of $\Gamma$.
In case $h=0$ (the stationary case), we assume that  $M$ has no triple junctions.
In case $h\ne 0$, we assume that $M$ is tame, lies in the slab bounded by $Q_1\cup Q_2$ and is nowhere tangent to the boundary of the slab.

Suppose $P$ is a hyperplane of symmetry of the boundary $\Gamma$ and
that $P$ is perpendicular to the planes $Q_i$. 
Then $P$ is a plane of symmetry of $M$. Moreover, the portion of $M$ on each side of $P$ is a smooth graph over a region in $P$. 
\end{theorem}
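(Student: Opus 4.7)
Plan: Apply the moving plane method as outlined in the introduction. Let $\nu$ be a unit normal to $P$, and for $t \in [0,T]$ (with $T := \max_{x \in M}\nu \cdot x$) set $P_t := P + t\nu$, $M_t^+ := M \cap \{\nu \cdot x \geq t\}$, $M_t^- := M \cap \{\nu \cdot x \leq t\}$, and let $R_t$ denote reflection across $P_t$. The target is the invariant $(\star_t)$: \emph{$M_t^+$ is a smooth hypersurface with boundary in $P_t \cup \Gamma$, and $\reg R_t(M_t^+) \cap \reg M_t^- \cap \{\nu \cdot x < t\} = \emptyset$.} For $t$ just below $T$, $(\star_t)$ is the starting step: the topmost portion of $M$ lies near $\Gamma$ (by the convex-hull property in the stationary case, and by the slab assumption in the CMC case), so Allard's boundary regularity (applicable via the density-$1/2$ hypothesis and, in the CMC case, tameness) makes $M_t^+$ smooth, and the strict convexity of each $\Gamma_i$ forces $R_t(M_t^+)$ to fit strictly inside the region bounded by $M_t^-$. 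Set $t^* := \inf\{s \geq 0 : (\star_t) \text{ holds for every } t \in [s,T]\}$; a standard varifold-compactness and curvature-bound argument yields $(\star_{t^*})$. Suppose, toward a contradiction, $t^* > 0$; then a sequence $t_k \nearrow t^*$ along which $(\star_{t_k})$ fails produces, in the limit, a contact point $p$ at $t = t^*$, either \emph{interior} ($\nu \cdot p < t^*$) or \emph{on $P_{t^*}$}. In the interior case, $R_{t^*}(M_{t^*}^+)$ is smooth at $p$ (smoothness passes to the limit via $(\star_t)$ for $t \searrow t^*$), so Theorem~\ref{strong_max_varifold_intro} applies with $M_1 := R_{t^*}(M_{t^*}^+)$ and $M_2 := M_{t^*}^-$; the CMC ``curving oppositely'' alternative is excluded by the common orientation inherited from $M$. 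Local coincidence is upgraded to global equality on the relevant components by an open-and-closed argument along $\reg R_{t^*}(M_{t^*}^+)$, rendering $M$ symmetric about $P_{t^*}$. Then $\Gamma$ is symmetric about both $P$ and $P_{t^*}$, so $R_{t^*} \circ R_0$ is a nontrivial translation preserving the compact $\Gamma$, contradicting the strict convexity of each $\Gamma_i$.

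In the boundary case, $p \in P_{t^*} \cap M$ with $p \notin \Gamma$ (ruled out by strict convexity, which forces the reflected boundary arc to sit strictly inside $\Gamma \cap \{\nu \cdot x \leq t^*\}$ in each $Q_i$). The stuck-at-$t^*$ condition forces the tangent cones of $M_1 := R_{t^*}(M_{t^*}^+)$ and $M_2 := M_{t^*}^-$ at $p$ to coincide, and hence forces the full tangent cone $C$ of $M$ at $p$ to be $R_{t^*}$-invariant. If $C = P_{t^*}$, Allard's regularity makes $M$ smooth at $p$ with tangent plane $P_{t^*}$, and the classical Hopf lemma applied to the graph representation gives a contradiction. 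Otherwise $C \neq P_{t^*}$, so hypothesis (ii) of Theorem~\ref{mirror-theorem_intro} holds with $\mathbb{H} := \{\nu \cdot (x - p) < 0\}$; hypotheses (i) and (iii) follow from tameness and $(\star_t)$ for $t \searrow t^*$ respectively, and the CMC alternative (a) is again excluded by common orientation. Conclusion (b) then yields smoothness at $p$ with distinct tangents, contradicting the coincidence of tangent cones established above. Hence $t^* = 0$ and $(\star_0)$ holds; running the argument from the opposite side via the $P$-symmetry of $\Gamma$ yields the reverse inclusion and therefore $R_0(M_0^+) = M_0^-$ together with the claimed smoothness on each side of $P$.

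The principal obstacle is this boundary-contact case, where the tangent-cone structure of $M$ at $p$ must be unpacked carefully to separate the degenerate subcase $C = P_{t^*}$ from the regime where Theorem~\ref{mirror-theorem_intro} can be applied. A secondary challenge is the starting step, which requires Allard's boundary regularity combined with the strict convexity of each $\Gamma_i$, and which also powers the ruling out of boundary contact on $\Gamma$ itself.
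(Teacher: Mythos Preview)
Your overall architecture---moving plane, interior contact handled by the Solomon--White maximum principle, boundary contact handled by the varifold Hopf lemma---matches the paper's. But the boundary-contact step contains a genuine gap.

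You apply Theorem~\ref{mirror-theorem_intro} to $M_1:=R_{t^*}(M_{t^*}^+)$ and $M_2:=M_{t^*}^-$. Each of these is a varifold with \emph{boundary} in $P_{t^*}$, so $p$ is not an interior point of either, and neither is stationary/CMC in a full ball $B(p,r)$; hypothesis~(i) of the theorem fails. The paper avoids this by applying the Hopf lemma to $M$ and its \emph{full} reflection $M'$: both are stationary/CMC in a neighborhood of $p$, $p$ is a tame interior point of each, and hypothesis~(iii) follows since $\reg M\cap\{x_1<t^*\}=\reg M_{t^*}^-$ while $\reg M'\cap\{x_1<t^*\}=\reg M_{t^*}^*$.

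Relatedly, the assertion ``the stuck-at-$t^*$ condition forces the tangent cones of $M_1$ and $M_2$ at $p$ to coincide'' is not justified, and you then use it to manufacture a contradiction with conclusion~(b) of the Hopf lemma. The paper's logic is more direct: by the smooth Hopf lemma, $\ee_1\cdot\nu>0$ at every \emph{regular} point of $M\cap P_{t^*}$, so if $t^*\notin\Ss$ there must be a \emph{singular} point $p\in M\cap P_{t^*}$ (and $p\notin\Gamma$ by Allard boundary regularity). Singularity already rules out the tangent cone being the multiplicity-one plane $\partial\mathbb{H}$, so hypothesis~(ii) holds without a separate case split; conclusion~(b) of Theorem~\ref{mirror-theorem_intro} then gives smoothness at $p$, contradicting the choice of $p$. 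This removes both your subcase $C=P_{t^*}$ and the unjustified coincidence-of-cones claim.
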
 

\begin{remark}  A careful inspection of the proof shows that the assumption that $\Gamma_i$ is smooth, closed 
and strictly convex can be replaced by the weaker assumption that $\Gamma_i$ is smooth, closed, connected and each line perpendicular to $P$ intersects $\Gamma_i$ in at most 2 points.
\end{remark}

Before starting the moving plane argument, let us record the following basic properties of $M$:

\begin{proposition}[enclosed domain and mean curvature]\label{interior_lemma}
Let $D_1$, $D_2$ be the convex domains in $Q_1$, $Q_2$ bounded by $\Gamma_1$ and $\Gamma_2$. Then 
\begin{enumerate}
\item $M\cup D_1\cup D_2$ bounds a compact set $K \subseteq \mathbb{R}^{n+1}$. 
\item $\reg M$ is connected.
\item If $h\ne 0$, then either $\mathbf{H}=h\nu$ everywhere in $\reg M$,
or $\mathbf{H}= - h\nu$ everywhere in $\reg M$, where $\nu$ the outward unit normal
to $K$.
\end{enumerate}
\end{proposition}

\begin{proof}
By Lemma~\ref{lemma_tame} below in $h=0$ case and by assumption in the $h\ne 0$ case, the varifold $M$ is tame. 

Hence, by Definition \ref{def_tame} (tameness) any tangent cone to $M$ that splits off an $\mathbb{R}^{n-1}$-factor must be a multiplicity-one hyperplane. By standard stratification (see e.g. \cite{Simon_GMT}) this implies
that the singular set has Hausdorff dimension at most $n-2$, which in turn implies
that the singular set has 
  $\mathcal{H}^{n-1}$ measure $0$ and that $\reg M$ is dense in $M$.
Since $\sing M\subset \mathbb{R}^{n+1}$ has codimension bigger than $2$, we have the following two properties:
\begin{itemize}
\item any smooth curve in $\mathbb{R}^{n+1}$ can be perturbed such that it avoids $\sing M$,
\item any smooth $2$-disc in $\mathbb{R}^{n+1}$ bounding a closed curve can be perturbed such that it avoids $\sing M$ and meets $\reg M$ transversally.
\end{itemize}
Hence, by standard intersection theory (see e.g. \cite{Samelson}) the set $M\cup D_1\cup D_2$ encloses a compact set $K \subseteq \mathbb{R}^{n+1}$:
\begin{equation}
\partial K=M\cup D_1\cup D_2.
\end{equation}

To prove connectedness of $\reg M$, suppose that $\reg M$ is not connected.  Let $M'$ be a connected component of $\reg M$.
Then $M'':= (\reg M)\setminus M'$ is nonempty.
Since $M$ is connected and since $\reg M$ is dense in $M$, 
there is a point $p$ that lies in the closures of $M'$ and of $M''$.
By translating, we can assume that $p=0$.
Tameness implies that the multiplicity-one varifolds $V'$ and $V''$ associated to $M'$ and $M''$ have constant mean curvature $h$.
(The corresponding fact for shrinkers is proved in~\cite{CHHW}: see the proof
of Assertion~(5) of Lemma~3.7, beginning with ``Alternatively". 
  Exactly the same proof works in this setting.)
By Corollary~\ref{hopf-corollary}, 
$M_1$ and $M_2$ are oppositely curved at $0$ and thus have
a mutual tangent cone $(\Sigma, \Sigma)$.
The corresponding tangent cone of $M$ is $\Sigma$ with multiplicity $2$ almost everywhere,
violating tameness.  This completes the proof that $\reg M$ is connected.

The last assertion of the proposition follows immediately from the connectedness of
 $\reg M$.
\end{proof}

After these preparations, we can now implement our new moving plane method:

\begin{proof}[Proof of Theorem \ref{main_boundary_combined}]
We can assume without loss of generality that the planes $Q_1$ and $Q_2$ 
are parallel to the plane $\{x_{n+1}=0\}$ and that $P$ is the plane $\{x_1=0\}$.
If $Z$ is any set (such as $M$ or $\Gamma$) in $\RR^{n+1}$ and if $s\ge 0$, let 
\begin{align*}
  Z_s^+ &= Z \cap \{x_1>s\},\\
  Z_s^- &= Z\cap \{x_1<s\}, 
\end{align*}
and let $Z_s^*$ be the image of $Z_s^+$ under reflection in the plane $\{x_1=s\}$.

\newcommand{\Ss}{\mathcal{S}}
Let $\Ss$ be the set of $s>0$ such that
\begin{enumerate}[\upshape (i)]
\item\label{regular-item} In $M\cap \{x_1\ge s\}$, every point is a regular point and $\ee_1\cdot\nu>0$, where $\nu$ is the unit normal to $M$ that points out of $K$.
\item Each line parallel to the $x_1$-axis intersects $M_s^+$ in at most one point.
\item $K_s^* \subseteq K_s^-$.
\item\label{disjoint-item} $M_s^*$ and $M_s^-$ are disjoint.
\end{enumerate}
Note that $\Ss$ is open and nonempty (since it contains large values of $s$), and that if $s\in \Ss$, then every $s'>s$ is also in $\Ss$.
Thus
\begin{equation}
  \Ss = (t,\infty),
\end{equation}
where $t:=\inf \Ss\ge 0$. 

From~\ref{regular-item}--\ref{disjoint-item}, we see that
\begin{enumerate}[\upshape (i)$^\prime$]
\item\label{graphy-prime} In $M_t^+$, every point is a regular point and $\ee_1\cdot\nu>0$.
\item\label{normal-prime} Each line parallel to the $x_1$-axis intersects $M_t^+$ in at  most one point.
\item $K_t^* \subseteq K_t^-$.
\end{enumerate}

\begin{claim} $t=0$.
\end{claim}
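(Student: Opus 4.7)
My plan is to argue by contradiction: assume $t > 0$ and produce some $s$ slightly less than $t$ that still lies in $\Ss$. Since $\Ss$ is open, (i)$'$, (ii)$'$, (iii)$'$ already hold at $s=t$ by continuity, so the failure of $t$ to lie in $\Ss$ must be caused by some form of contact between $M_t^*$ and $M_t^-$ at $s = t$. I would split into two subcases:
(A) an \emph{interior contact} point $p \in M \cap \sigma(M)$ with $p_1 < t$ (here $\sigma$ is reflection in $\{x_1 = t\}$), or
(B) a \emph{boundary contact} point $p \in M \cap \{x_1 = t\}$ which is either singular or regular with $\nu(p)\cdot\ee_1 = 0$.
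If neither (A) nor (B) occurs, then every $p \in M \cap \{x_1 = t\}$ is smooth with $\nu(p)\cdot\ee_1 > 0$ and the closures of $M_t^-$ and $M_t^*$ are disjoint; a routine compactness/implicit-function argument (together with strict convexity of $\Gamma_1, \Gamma_2$ to handle contact near $\Gamma \cap \{x_1 = t\}$) then gives (i)--(iv) for some $s < t$, contradicting $t = \inf\Ss$.

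For Case (A), I would apply the maximum principle for varifolds (Theorem~\ref{strong_max_varifold_intro}). The reflected point $\sigma(p)$ lies in $M_t^+$ and is regular by (i)$'$, so $\sigma(M)$ is smooth at $p$. The inclusion $K_t^* \subseteq K_t^-$ from (iii)$'$ places $M$ locally on one side of $\sigma(M)$, and in the CMC case that same inclusion forces the outward normals to agree at $p$, ruling out ``curving oppositely''. The maximum principle then yields $M = \sigma(M)$ near $p$. Since tameness gives $\Hh^{n-1}(\sing M) = 0$, the regular part $\reg M$ is connected, and unique continuation for the analytic minimal/CMC equation propagates the local identity to the global equality $M = \sigma(M)$. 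But this makes $\Gamma$ symmetric across $\{x_1 = t\}$, contradicting the fact that strict convexity of $\Gamma_1, \Gamma_2$ forces $P = \{x_1 = 0\}$ to be the unique hyperplane of symmetry of $\Gamma$ perpendicular to $Q_1$.

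For Case (B), I would apply the Hopf lemma for varifolds (Theorem~\ref{mirror-theorem_intro}) with $M_1 = M$, $M_2 = \sigma(M)$, and $\mathbb{H} = \{x_1 > t\}$ translated so that $p$ is the origin. Tameness supplies hypothesis~(i); the non-tangent-cone hypothesis~(ii) follows from (i)$'$ via Allard's regularity theorem and the monotonicity formula, since such a tangent cone would extend $M_t^+$ into a plane where $\ee_1 \cdot \nu = 0$; and hypothesis~(iii), disjointness of regular parts in $\mathbb{H}$, follows from the absence of Case (A) together with (iv) for $s > t$. CMC ``curving oppositely'' is again ruled out by consistent orientations. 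The remaining conclusion is that $M$ and $\sigma(M)$ are smooth at $p$ with distinct tangents, so $T_pM$ is not $\sigma_*$-invariant; in particular both $T_pM = \partial\mathbb{H}$ and $\ee_1 \in T_pM$ are excluded, giving $\nu(p)\cdot\ee_1 \neq 0$. Combined with continuity from $M_t^+$ (where $\nu\cdot\ee_1 > 0$), this forces $\nu(p)\cdot\ee_1 > 0$, contradicting Case (B). I expect the two trickiest steps to be verifying hypothesis~(ii) of the Hopf lemma at a point $p \in \partial\mathbb{H}$ where $M$ might a priori admit a degenerate tangent cone, and treating contact points that happen to lie on $\Gamma \cap \{x_1 = t\}$, for which the strict convexity of $\Gamma$ and the density-$1/2$ boundary condition play essential roles.
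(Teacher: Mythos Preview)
Your approach is essentially the paper's. The one organizational difference is that the paper splits your Case~(B) into two steps: it first applies the \emph{classical} (smooth) Hopf lemma at each regular point of $M\cap\{x_1=t\}$ to conclude $\ee_1\cdot\nu>0$ there, so the only remaining obstruction is a \emph{singular} point $p$ (with $p\notin\Gamma$ by Allard's boundary regularity theorem), to which the varifold Hopf lemma is then applied. Your unified treatment via the varifold Hopf lemma also works, but your justification of hypothesis~(ii) is garbled: if the tangent cone at $p$ were $\partial\mathbb{H}$, Allard would make $p$ regular with $\nu(p)=\pm\ee_1$, i.e.\ $\ee_1\cdot\nu(p)=\pm1$, not $0$. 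The clean reason hypothesis~(ii) holds throughout your Case~(B) is that a singular $p$ has no multiplicity-one planar tangent cone, while a regular $p$ with $\ee_1\cdot\nu(p)=0$ has $\ee_1\in T_pM$, hence $T_pM\neq\partial\mathbb{H}=\ee_1^\perp$.
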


\begin{proof}[Proof of the claim] If $M_t^*$ touched $M_t^-$ at some point $q$, then by the Solomon-White maximum principle (Theorem \ref{strong_max_varifold_intro}), the entire connected component of $M_t^*$ containing $q$
would lie in $M_t^-$.  
Let $U$ be the largest open subset of $\reg M$ such that the image of $U$ under
reflection in $\{x_1=t\}$ is contained in $M$.  
By unique continuation and by connectedness of $\reg M$, 
it follows that $U=\reg M$ and thus that the plane $\{x_1=t\}$ is a plane of symmetry of $M$.
Therefore it is a plane of symmetry of $\Gamma$, and hence $t=0$ as claimed.

Thus in proving the claim, we may assume that $M_t^*$ and $M_t^-$ are disjoint. 

Suppose that the claim is false, i.e., that $t>0$. Note that $\ee_1\cdot\nu>0$ at each regular point $q$ of $(M-\Gamma)\cap\{x_1=t\}$. Indeed, if $M_{t}^{\ast}\neq \emptyset$ this follows from the smooth Hopf lemma, and if $M_{t}^{\ast}=\emptyset$ we clearly have $\ee_1 \cdot \nu=1$. Now, for any regular boundary  point $q\in M\cap \Gamma\cap \{x_1=t\}$,  our assumption of no tangency of $M$ to $Q_i$ when $h\neq 0$, and the standard smooth Hopf lemma (w.r.t. the planes $Q_i$) when $h=0$ imply that $\nu \neq \pm e_{n+1}$. Since the normal $N$ to $\Gamma$ within $Q_1\cup Q_2$ satisfies $\ee_1\cdot N \neq 0$, this and continuity imply that $\ee_1 \cdot \nu >0$. Thus, since $t\notin \Ss$, there must be a singular point $p$ in $M\cap\{x_1=t\}$.
In light of our assumption of density $1/2$ at boundary points of $M$, the Allard Boundary Regularity Theorem~\cite{Allard_boundary} implies that $p$ is not in $\Gamma$. Furthermore, this also shows that $M_t^{\ast}\neq \emptyset$ as otherwise all points in $(M-\Gamma)\cap\{x_1=t\}$ would be regular by tameness and \cite{SolomonWhite}.

Let $K'$ and $M'$ be the images of $K$ and $M$ under reflection in the plane $\{x_1=t\}$. Since $p$ is a singular point for $M$, no tangent cone to either  $M$ or $M'$ is $\{x_1=0\}$. 
Thus, by the  Hopf lemma for varifolds (Theorem \ref{mirror-theorem_intro})  (applied
to $M$, $M'$ and the halfspace $\{x_1>t\}$ at the point $p$), either
\begin{enumerate}[\upshape (a)]
\item\label{opposites} $h\ne 0$ and $M$ and $M'$ are curve oppositely at $p$, or
\item\label{crossing} $M$ and $M'$ are smooth at $p$, with distinct tangents.
\end{enumerate}
Now~\ref{crossing} cannot occur since $p$ is a singular point of $M$.
But~\ref{opposites} also cannot occur, since if $h\ne 0$ and if $(\Sigma,\Sigma)$ is a mutual
tangent cone pair to $M$ and $M'$, then either $\mathbf{H}\equiv h\nu$ everywhere
on $\reg M$ or 
or $\mathbf{H}\equiv -h \nu$ everywhere on $\reg M$
(by Proposition \ref{interior_lemma}), which forces the orientations on $\Sigma$ to be consistent and therefore that $M$ and $M'$ do not curve oppositely at $p$.
The contradiction completes the proof of that $t=0$.
\end{proof}

Continuing the proof of the theorem, since $t=0$, we see that
 $K_0^*\subseteq K_0^-$.
The same argument shows that the reflected image of $K\cap \{x_1<0\}$ lies in $K\cap \{x_1>0\}$.
Thus $K$ and $M$ are invariant under reflection in $\{x_1=0\}$. 
This together with~\ref{graphy-prime} and~\ref{normal-prime} completes the proof of~Theorem \ref{main_boundary_combined}.
\end{proof}

\begin{corollary}\label{full_sym_cor}
Suppose in Theorem \ref{main_boundary_combined} that $\Gamma_1$ and $\Gamma_2$ are $(n-1)$-spheres in parallel $n$-planes  that are rotationally invariant about the same axis.
   Then $M$ is smooth and rotationally invariant about that axis, and thus is a portion of an $n$-dimensional catenoid or of a Delaunay hypersurface.
\end{corollary}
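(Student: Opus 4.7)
The plan is to apply Theorem~\ref{main_boundary_combined} to every hyperplane $P$ containing the common rotation axis $A$ of $\Gamma_1$ and $\Gamma_2$. Each such $P$ is a hyperplane of symmetry of $\Gamma$, so the theorem yields simultaneously that $P$ is a plane of symmetry of $M$, and that the portion of $M$ on each side of $P$ is a smooth graph over a region in $P$.

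From the symmetry conclusion I would extract rotational invariance. After rotating coordinates so that $A$ is the $x_{n+1}$-axis, the hyperplanes through $A$ are exactly the $n$-planes $\{v\cdot x=0\}$ with $v\in\RR^n\times\{0\}$. The reflections in these generate the full orthogonal group $O(n)$ acting on the first $n$ coordinates and fixing the last, so the reflective invariance of $M$ in each such hyperplane upgrades to invariance of $M$ under all of $O(n)$, i.e.\ rotational invariance about $A$.

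From the graph conclusion I would obtain smoothness away from $A$: any $p\in M\setminus A$ has a nonzero horizontal component, so choosing $P$ through $A$ perpendicular to that component places $p$ strictly on one side of $P$, making $p$ a smooth point. Thus $\sing M\subseteq A$. For an axis point $p\in A\cap M$, the tangent cones at $p$ are $O(n)$-invariant; they are stationary even in the CMC case, since the mean curvature rescales to zero under blow-up. By tameness, each tangent cone has multiplicity one off a set of zero $(n-1)$-dimensional Hausdorff measure. A stationary $O(n)$-invariant integral cone in $\RR^{n+1}$ has a profile consisting of rays through the origin in the half-plane, and such a ray produces a minimal cone over an $(n-1)$-sphere only when it is horizontal; combined with the multiplicity-one condition this forces the tangent cone to be a single multiplicity-one hyperplane perpendicular to $A$, and Allard's regularity theorem then yields smoothness at $p$.

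Finally, a smooth compact connected rotationally invariant hypersurface of $\RR^{n+1}$ that is minimal (resp.\ has constant nonzero mean curvature) and whose boundary is the given pair of coaxial $(n-1)$-spheres is, by the classical ODE classification of hypersurfaces of revolution with vanishing (resp.\ constant nonzero) mean curvature, a portion of an $n$-dimensional catenoid (resp.\ a Delaunay hypersurface). The most delicate step will be the smoothness-at-axis argument; all other steps are direct consequences of Theorem~\ref{main_boundary_combined} and standard classification facts.
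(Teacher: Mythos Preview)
Your proposal is correct and follows essentially the same approach as the paper: apply Theorem~\ref{main_boundary_combined} to every hyperplane through the axis to obtain rotational invariance and smoothness off the axis, then use rotational invariance of the tangent cone at an axis point together with tameness to conclude it is a multiplicity-one plane and invoke Allard. Your write-up simply supplies more detail than the paper's terse version (e.g., that reflections generate $O(n)$, and why a rotationally invariant stationary integral cone must be a hyperplane), but the logic is identical.
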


\begin{proof}
We can apply Theorem \ref{main_boundary_combined} for each $n$-plane $P$ containing the axis.
It follows that $M$ is rotationally invariant about the axis, and smooth except possibly along the axis.
If $M$ contained a point on the axis, then the tangent cone would be rotationally invariant about the axis,
and thus by tameness would be a multiplicity-one plane. By Allard's Regularity Theorem, it
would be a regular point.   Thus $M$ is smooth everywhere. This proves the corollary.
\end{proof}

Let us observe that the above argument also yields uniqueness of CMC-spherical caps:

\begin{proof}[Proof of Theorem \ref{CMC_uniq}]
The proof is similar as above, with the only change that now the boundary has only one component instead of two components.
\end{proof}

The following lemma has been used in the above proof:

\begin{lemma}[tameness]\label{lemma_tame} Every stationary integral $n$-varifold $M$ satisfying the assumptions of Theorem \ref{main_boundary_combined} is tame.
\end{lemma}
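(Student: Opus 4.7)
The plan is to combine Allard's boundary regularity with Federer dimension reduction and the constancy theorem, and then transfer the resulting control to tangent cones.

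First, I would invoke Allard's boundary regularity theorem: since the density of $M$ equals $\tfrac12$ at each point of $\Gamma$, the tangent cone at every boundary point is forced to be a multiplicity-one half-hyperplane, so $M$ is a $C^{1,\alpha}$ hypersurface-with-boundary of multiplicity one in an open neighborhood $U$ of $\Gamma$, meeting $\Gamma$ transversely. In particular $U\setminus\Gamma\subset\reg M$ carries multiplicity one.

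Second, I would apply Federer's dimension reduction to show $\dim\sing M\le n-2$. The codimension-one stratum $S^{n-1}(M)\setminus S^{n-2}(M)$ consists of points whose tangent cones split off $\RR^{n-1}$, hence are of the form $\RR^{n-1}\times C$ for a nonplanar stationary integral $1$-cone $C\subset\RR^2$. By the standard classification, $C$ is a balanced sum of half-lines through the origin with integer multiplicities. The no-triple-junction hypothesis (interpreted, as is standard in this literature, to forbid every classical codimension-two cross-section) together with the multiplicity-one conclusion below eliminates every such $C$, so the codimension-one stratum is empty, giving $\dim\sing M\le n-2$. Since $\spt M$ is connected and $\sing M$ has Hausdorff dimension at most $n-2$, any two points of $\reg M$ can be joined by a path in $\reg M$ by perturbing paths off the low-dimensional singular set; hence $\reg M$ is connected, and the constancy theorem upgrades Step 1 to give multiplicity one throughout $\reg M$.

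Third, I would transfer these conclusions to tangent cones. For any $x\in M\setminus\Gamma$ and any tangent cone $T$ at $x$, the varifold $T$ is itself a stationary integral $n$-cone, and every iterated tangent cone of $T$ is a tangent cone of $M$ at $x$ by the usual diagonal argument. Running Step 2 for $T$ and its iterated tangent cones (there are no new hypotheses to check, since the no-triple-junction hypothesis passes to tangent cones) yields $\dim\sing T\le n-2$ and multiplicity one on $\reg T$. Thus $\mathcal{H}^{n-1}(\sing T\cup\{\text{multiplicity}>1\})=0$, which is exactly tameness in the sense of Definition~\ref{def_tame}.

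The main obstacle will be Step 2: one must verify that the no-triple-junction hypothesis is really strong enough to eliminate \emph{every} nonplanar stationary integral $1$-cone in $\RR^2$ that could appear, not only three half-lines meeting at $120^\circ$. Higher-multiplicity lines are ruled out by the multiplicity-one conclusion applied recursively, so the argument has a slightly circular flavor which must be unwound by inducting on the stratification level; higher-order balanced-junction cones are typically absorbed into the "triple junction" terminology used here, but under a strict reading of the hypothesis a flux-balance or parity argument may be needed to rule them out explicitly.
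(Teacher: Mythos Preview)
Your approach diverges from the paper's at the very first step, and the divergence is what creates the circularity you flag. The paper never tries to propagate multiplicity one from the boundary via constancy. Instead it uses the \emph{extended monotonicity formula} of Ekholm--White--Wienholtz: for $x\in M\setminus(Q_1\cup Q_2)$ one applies monotonicity to $M\cup E_1\cup E_2$, where $E_a$ is the exterior cone over $\Gamma_a$ with vertex $x$; strict convexity of $\Gamma_a$ forces the density at infinity of each $E_a$ to be strictly less than $1$, so $\Theta(M,x)<2$. This single geometric estimate immediately gives multiplicity one on every tangent cone (by monotonicity inside the cone), and simultaneously reduces the list of possible $1$-dimensional cross-sections to exactly two: a multiplicity-one line, or a genuine $120^\circ$ triple junction. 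So there is no induction to unwind and no need to interpret ``no triple junctions'' broadly.

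Your Step~3 also contains an error independent of the circularity. The claim that ``every iterated tangent cone of $T$ is a tangent cone of $M$ at $x$'' is false: a tangent cone to $T$ at a point $y\ne 0$ arises as a limit of rescalings of $M$ centered at \emph{moving} points $x_i\to x$, not at $x$ itself, so it is a blowup limit but not a tangent cone. Hence the no-triple-junction hypothesis for $M$ does not transfer to $T$ for free. The paper closes this gap with a parity argument, and this is where the \emph{global} content of the hypothesis is actually used: once one knows $\mathcal H^{n-1}(\sing M)=0$ (which follows from density $<2$ plus no triple junctions in $M$ itself), $M$ carries a mod-$2$ class, so every small closed curve near $x$ meets $M$ an even number of times. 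But if some tangent cone $C$ at $x$ had a triple-junction tangent cone $T$, then pulling back the unit circle in the cross-sectional plane of $T$ gives, via Allard, a closed curve meeting $M$ transversally at exactly three regular points. That is the contradiction.

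In short, the two ingredients you are missing are (a) the density bound $\Theta<2$ from extended monotonicity with exterior cones over the convex boundary, and (b) the intersection-theoretic parity argument to pass from ``no triple junctions in $M$'' to ``no triple junctions in iterated tangent cones''.
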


\begin{proof}
Let $x\in M\setminus(Q_1\cup Q_2)$. For $a=1,2$ denote by $E_a$ the exterior cone over $\Gamma_a$ with vertex $x$:
\begin{equation}
E_a=\{x+ \lambda(y-x): y\in \Gamma_a\}.
\end{equation}
Applying the extended monotonicity formula from \cite{EkholmWhiteWienholtz} to $M\cup E_1\cup E_2$, we get
\begin{equation}\label{density_bound}
\Theta(M,x)<2,
\end{equation}
where we used that $E_a$ has density at infinity strictly less than $1$, thanks to convexity. This shows that all tangent cones of $M$ have multiplicity $1$. 

The assumption of Theorem \ref{main_boundary_combined} in case $h=0$ is that $M$ has no triple-junction tangents. To obtain tameness, we need to show that every \textit{tangent cone of $M$} has no triple-junction tangent either. Suppose towards a contradiction that $M$ has a tangent cone $C$ at some point $x$, such that $C$ itself had a triple-junction tangent cone $T$ (not at the origin, of course). Then there exists $x_i\to x$ and $\lambda_i\to \infty$ such that $M_i:=\lambda_i(M-x_i)$ converges to $T$ in the sense of varifolds.  Let $\gamma$ be a circle of radius $1$ around the origin in the cross-sectional plane of the triple junction, and observe that $\gamma$ intersects $T$ transversally at three regular points. Hence, by Allard's regularity theorem, $\gamma$ intersects $M_i$ transversally at three regular points, when $i$ is large enough. Thus, there exists a curve $\tilde{\gamma}$ intersecting $M$ transversally at three regular points, and nowhere else. 

On the other hand, as $M$ itself has no triple-junction singularity, it follows from dimension reduction that $\mathcal{H}^{n-1}(\sing M)=0$. In particular, every closed smooth curve in $\mathbb{R}^{n+1}-\sing M $ transversal to $\reg M$ must intersect $M$ an even number of times, a contradiction.
\end{proof}

To conclude this section, we give the proof of Corollary \ref{GMT_cor}:

\begin{proof}[Proof of Corollary \ref{GMT_cor}]
To see that~\ref{div-inequality} and~\ref{mod-2-boundary} imply~\ref{annulus-stationary-hypothesis}, \ref{annulus-density-hypothesis},
and~\ref{annulus-triple-hypothesis},
note that~\ref{div-inequality} implies stationarity of $M$ away from~$\Gamma$. 
By the convex hull property, any tangent cone to $M$ at any point of $\Gamma$ lies in a wedge.
By Corollary~32 of~\cite{white-boundary-mcf}, such a tangent cone is a union of halfplanes.  
The mod $2$ hypothesis~\ref{mod-2-boundary} implies that the number $k$ of halfplanes (counting multiplicity) is odd.
The inequality~\ref{div-inequality} then implies that $k=1$, and thus that the density is $1/2$.  (See Theorem~34 of \cite{white-boundary-mcf}
for details.)  Finally, \ref{mod-2-boundary} implies that there are no triple junctions.  (If there were a triple junction, we could find a
small circle that does not link $\Gamma$ and that intersects $ M$ transversely in exactly three points, each of multiplicity one.)
\end{proof}

\bigskip

\section{Moving planes for varifolds without boundary}\label{sec_complete}

In this section, we implement our version of the moving plane method in the setting of varifolds without boundary 
and we prove Theorem \ref{main_complete_theorem}, which we restate here for convenience of the reader:

\begin{theorem}[Uniqueness of the catenoid]\label{main_complete_theorem_restated}
Suppose $M$ is stationary integral $n$-varifold in $\mathbb{R}^{n+1}$ such that
\begin{enumerate}
\item some tangent cone at infinity is a multiplicity-2 plane,
\item $M$ has at least two ends,
\item $M$ has no triple junctions.
\end{enumerate}
Then $M$ is a smooth hypersurface of revolution, and thus either a pair of parallel planes or an $n$-dimensional catenoid.
\end{theorem}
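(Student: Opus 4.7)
The plan is to adapt Schoen's \cite{Schoen} classical uniqueness argument for the catenoid, with the varifold Hopf lemma (Theorem \ref{mirror-theorem_intro}) as the crucial new ingredient that allows the moving plane method to survive potentially singular touching points. The first step is to establish the asymptotic structure. After a rotation we may assume that the multiplicity-$2$ tangent cone at infinity is $P=\{x_{n+1}=0\}$. By the monotonicity formula, $\Theta(M,x)\le 2$ everywhere, and at any point $x\in M$ far from the origin the varifold locally consists of a single sheet (since $M$ has two ends), so $\Theta(M,x)$ is close to $1$. Allard's regularity theorem then gives that, outside some ball $B_R$, $M$ is a smooth minimal hypersurface; combined with hypothesis (2) and the multiplicity-$2$ asymptotics, this forces $M\setminus B_R$ to have exactly two connected components, each of which is a smooth minimal graph over $P$ asymptotic to $P$. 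The classical asymptotic analysis for such graphs tells us that each end is asymptotic either to a horizontal plane or to a catenoidal end, and that the two ends lie on opposite sides of some horizontal slab outside a larger ball. Tameness of $M$ then follows by exactly the argument used in Lemma \ref{lemma_tame}.

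Next, I would run the moving plane method in every horizontal direction. Fix a unit vector $e\in P$, write $\Pi_t=\{x\cdot e=t\}$, let $R_t$ denote reflection across $\Pi_t$, and set $M_t^\pm=M\cap\{\pm(x\cdot e-t)>0\}$ and $M_t^\ast=R_t(M_t^+)$. Using the smooth asymptotic structure, for $t$ large enough the varifold $M_t^+$ is a smooth graph over $\Pi_t$, and $M_t^\ast\cap M_t^-=\emptyset$. Let $t^\ast(e)$ be the infimum over all such $t$. The aim is to show that $\Pi_{t^\ast(e)}$ is a plane of reflection symmetry of $M$. Supposing not, the failure at $t=t^\ast(e)$ must be at an interior touching point, since the asymptotic structure rules out failure at infinity (for $t$ just above $t^\ast(e)$ the reflected ends stay strictly separated from the original ends outside a large ball). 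If the touching is at a regular point of $M$, the varifold maximum principle (Theorem \ref{strong_max_varifold_intro}), applied with the smooth reflected piece as the barrier, together with the smooth Hopf lemma, give the usual contradiction and force $\Pi_{t^\ast(e)}$ to be a symmetry plane. If the touching occurs at a singular point $p$ of $M$, I apply the Hopf lemma for varifolds (Theorem \ref{mirror-theorem_intro}) to $M_1=M$, $M_2=R_{t^\ast(e)}(M)$, and $\mathbb H=\{x\cdot e<t^\ast(e)\}$; since $h=0$ conclusion (a) is vacuous, so $p$ must actually be a smooth point of both varifolds, reducing to the regular case already handled.

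Having obtained, for every horizontal direction $e$, a plane of reflection symmetry perpendicular to $e$, the standard group-theoretic argument shows that all these reflection planes contain a common vertical axis $L$: any two non-parallel reflection symmetries compose to a rotation of $M$, and varying $e$ continuously yields invariance of $M$ under the full rotation group about $L$. Hence $M$ is a hypersurface of revolution; as in the proof of Corollary \ref{full_sym_cor}, tameness together with Allard's regularity theorem upgrades this to full smoothness (including along $L$, by the argument that any rotationally invariant tangent cone at a point of $L$ is a multiplicity-one plane). A standard ODE analysis of the profile curve then shows that the only complete smooth minimal hypersurfaces of revolution in $\mathbb{R}^{n+1}$ with a multiplicity-$2$ planar tangent cone at infinity and two ends are the pair of parallel planes and the $n$-dimensional catenoid.

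The main obstacle is the singular-touching case in the moving plane step: classically one would invoke the smooth Hopf lemma, but the tangent cones here may be genuinely singular varifold cones, and the new Hopf lemma for varifolds (Theorem \ref{mirror-theorem_intro}) is precisely what is needed to promote such apparent singularities to smoothness. A secondary technical point is ruling out the ``failure at infinity'' scenario for the moving plane; this requires quantitative asymptotic control of each end relative to its model catenoid or plane, to ensure that for $t$ slightly above $t^\ast(e)$ the reflected piece $M_t^\ast$ stays strictly below $M_t^-$ uniformly in a neighborhood of infinity.
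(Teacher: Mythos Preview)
Your overall strategy---run the moving plane method with Theorem~\ref{mirror-theorem_intro} handling singular touching points---is exactly the paper's, but there is a meaningful difference in the order of operations that leaves a gap in your version.

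The paper moves planes in the \emph{vertical} direction $e_{n+1}$ first (Claim~\ref{claim_refl}), and only afterwards in horizontal directions (Claim~\ref{claim_refl2}). The vertical moving plane is easy to start because $M$ lies in the slab $\{a_-<x_{n+1}<a_+\}$ and the crude expansion $u_\pm = a_\pm + O(|x|^{2-n})$ already suffices to rule out failure at infinity. Once the vertical reflection symmetry is known, one has $u_- = -u_+$, so after the shift that recenters $u_+$ (Schoen's Proposition~3), \emph{both} ends are centered at the same horizontal point. This, together with the explicit argument that $b>0$ (via the divergence theorem applied to $\Delta_M x_{n+1}=0$), is precisely what controls the horizontal moving plane at infinity.

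Your proposal skips the vertical step and goes straight to horizontal planes. The issue you flag as a ``secondary technical point'' is then not secondary: a priori the two graphical ends may have refined expansions $u_\pm(x)=a_\pm \mp b_\pm|x-c_\pm|^{2-n}+O(|x|^{-n})$ with \emph{different} centers $c_+ \neq c_-$. If $c_+\cdot e \neq c_-\cdot e$, the horizontal moving plane will be stopped by the asymptotics of one end before reaching the center of the other, and at that level you do \emph{not} get a reflection symmetry of the whole $M$---the maximum principle and Hopf lemma say nothing, because there is no interior touching. So your argument, as written, does not close. (One can try to repair this by arguing from both sides in direction $e$ and deriving a contradiction from two distinct parallel reflection planes, but that requires exactly the fine asymptotic control you have deferred.) The paper's two-step order---vertical first to force $c_+=c_-$, then horizontal---is what makes the infinity analysis go through cleanly.
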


\begin{proof}
First observe that assumptions (i) and (ii) together with monotonicity imply that
\begin{equation}
\Theta(M,x)<2
\end{equation}
for every $x\in M$. Thus, the same argument as in the last paragraph of the proof of Lemma \ref{lemma_tame} shows that $M$ is tame. If $n=2$, then $M$ is smooth and rotational symmetry follows from the classical result of Schoen \cite{Schoen}, so we can assume from now on that $n\geq 3$. We can also assume that $M$ is connected, since otherwise by monotonicity it must be the union of two parallel planes.

By rotating, we can assume that the horizontal plane with multiplicity $2$ is a tangent
cone at infinity to $M$.  It follows that the tangent cone is unique, and that
there is an $R<\infty$ for which $M\setminus B^n(0,R)$ consists
of the graphs of two smooth functions 
$u_\pm : \mathbb{R}^{n}\setminus B^n(0,R)\rightarrow \mathbb{R}$, satisfying
\begin{equation}
\mathrm{lim}_{|x|\rightarrow \infty} |\nabla u_\pm(x) |=0.
\end{equation} 
(For proof, 
see the discussion on pages 269 and 270 of~\cite{Simon-Ends}.  Specifically, choose $R>0$ large enough that $M\setminus B(0,R)$ has two components, and then apply 
that discussion to each of those components.)   

Hence, the graphical minimal surface equation outside a large ball is just a perturbation of Laplace's equation on $\mathbb{R}^n$, which has Green's function $c_n|x|^{2-n}$, and it easily follows that
\begin{equation}\label{expansion1}
u_\pm = a_\pm + O(|x|^{2-n}),
\end{equation}
see \cite[Proposition 3]{Schoen}.
Using this, the maximum principle for varifolds (Theorem \ref{strong_max_varifold_intro}) implies that
\begin{equation}\label{expansion2}
\textrm{$M$ must be contained in the slab $\{a_{-}<x_{n+1}<a_{+}\}$,}
\end{equation}
in particular $a_{-}<a_{+}$. After translating in $x_{n+1}$-direction we can assume that $a_{-}=-a_{+}$. Moreover, as in Proposition \ref{interior_lemma}, we see that $M$ encloses a domain $K$ with one end.\\

\begin{claim}[reflection symmetry and smoothness away from plane]\label{claim_refl}
Reflection across $\{x_{n+1}=0\}$ is a symmetry of $M$. Moreover, $M\cap \{ x_{n+1}\neq 0\}$ is smooth.
\end{claim}

\begin{proof}[Proof of Claim \ref{claim_refl}]

For $s\in (a_{-},a_{+})$ we let
\begin{align*}
M_s^{-}&=\{x\in M\; : \; x_{n+1}<s\},\\
M_s^+&=\{x\in M\; : \; x_{n+1}>s\},
\end{align*}  
and we let $M_s^*$ be the image of $M_s^+$ under reflection in the plane $\{x_{n+1}=s\}$. 
Similarly, we let
\begin{align*}
K_{s}^{-}&=\{x\in K\; : \; x_{{n+1}}<s\},\\
K_{s}^{+}&=\{x\in K\; : \; x_{n=1}> s\},
\end{align*}  
and we let $K_s^*$ be the image of $K_s^+$ under reflection in the plane $\{x_{n+1}=s\}$.

\newcommand{\Ss}{\mathcal{S}}
Let $\Ss$ be the set of $s>0$ such that
\begin{enumerate}[\upshape (i)]
\item\label{regular-item} In $M\cap \{x_{n+1}\ge s\}$, every point is a regular point and $\ee_{n+1}\cdot\nu>0$, where $\nu$ is the unit normal to $M$ that points out of $K$.
\item Each line parallel to the $x_{n+1}$-axis intersects $M_s^+$ in at most one point.
\item $K_s^* \subset K_s^-$.
\item\label{disjoint-item} $M_s^*$ and $M_s^-$ are disjoint.
\end{enumerate}
Note that if $s\in \Ss$, then every $s'\in [s,a_{+})$ is also in $\Ss$. By \eqref{expansion1} and \eqref{expansion2}, if $t$ is sufficiently close to $a_+$ then $t\in \mathcal{S}$ and all points in $M\cap \{x_{n+1}\geq t\}$ are smooth. This gets the moving plane method started. Moreover, using again  \eqref{expansion1} and \eqref{expansion2} we see that  $\Ss$ is open. Hence,
\begin{equation}
  \Ss = (t,\infty),
\end{equation}
where $t:=\inf \Ss\ge 0$. Suppose towards a contradiction that $t>0$.

Using the Solomon-White maximum principle (Theorem \ref{strong_max_varifold_intro}) and \eqref{expansion1} we see that $M_t^*$ and $M_t^-$ are disjoint. 
By the smooth Hopf lemma, $\ee_{n+1}\cdot\nu>0$ at each regular point of $M\cap\{x_{n+1}=t\}$.
Thus, since $t\notin \Ss$, there must be a singular point $p$ in $M\cap\{x_{n+1}=t\}$.
Let $M'$ be the images of $M$ under reflection across the plane $\{x_{n+1}=t\}$. Letting $\Sigma$ be any tangent cone to $M$ at $p$, applying the Hopf lemma for varifolds (Theorem \ref{mirror-theorem_intro}) to $M$, $M'$, the point $p$, and the half space $\{x_{n+1}<t\}$ gives that $\Sigma$ is a plane. This contradicts the fact that $p$ is a singular point and thus shows that $t=0$.
This implies the assertion.
\end{proof}

\begin{claim}[rotational symmetry and smoothness away from the axis]\label{claim_refl2}
$M$ is rotationally symmetric around the $x_{n+1}$-axis and smooth away from the $x_{n+1}$-axis.
\end{claim}

\begin{proof}[Proof of Claim \ref{claim_refl2}]
By rotation of coordinates it suffices to consider the moving plane $\{x_1=t\}$. The argument is similar as above, with the only difference that getting the moving plane method started requires a somewhat more careful expansion at infinity. To this end, recall that by \cite[Prop. 3]{Schoen} after a suitable shift in the $(x_1,\ldots,x_n)$-plane the function $u_{+}$ (and by the established reflection symmetry, also $-u_{-}$) can be expanded as 
\begin{equation}\label{uplus_bd}
u_{+}(x)= a-b |x|^{2-n}+O(|x|^{-n}),
\end{equation}
where $a>0$, and moreover
\begin{equation}\label{der_bd}
\partial_i u_{+}(x)=-b(2-n)\frac{x_i}{|x|^{n}}+O(|x|^{-n-1}).
\end{equation}
(see \cite[Proposition 3]{Schoen} and its proof, and the argument in the second paragraph of \cite[page 807]{Schoen}). By the maximum principle for varifolds (Theorem \ref{strong_max_varifold_intro}) we have $b\geq 0$. Integrating the equation $\Delta_{M}x_{n+1}=0$ over a large annulus, and applying the divergence theorem, yields $b>0$. Using this and \eqref{uplus_bd} we see that for $\eps>0$ small enough we have that 
$M\cap \{x_{n+1} =a-\eps\}$ is $O(R_{\eps}^{-1})$ close in the Hausdorff sense to the  round $(n-1)$-sphere of radius 
\begin{equation}
R_\eps = \left(\frac{b}{\eps}\right)^{\tfrac{1}{n-2}},
\end{equation}
centered at $(0,\ldots, 0,a-\eps)$ in the plane $\{x_{n+1} =a-\eps\}$. By \eqref{der_bd} we further get that $M\cap \{x_{n+1} =a-\eps\}$ is smooth, contains a single sheet, and its exterior normal within the plane $\{x_{n+1}=a-\varepsilon\}$ satisfies $n(x)=\frac{x}{|x|}+O(|x|^{-2})$. Thus, $M\cap \{x_{n+1} =a-\eps\}$  is a normal graph over the sphere of radius $R_\eps$, having a global $C^1$ norm smaller than $CR_{\eps}^{-1}$. The same is true for $M\cap \{x_{n+1}=-a+\eps\}$. This shows that there is no contact at infinity and gets the moving plane method started. Pushing the moving plane as in the proof of the previous claim, this yields the assertion.
\end{proof}

By the above two claims, $M$ is reflection symmetric and rotationally symmetric and smooth, except possibly at the origin. If the origin where contained in $M$, then by rotational symmetry and tameness the tangent cone there would be a multiplicity-one plane. Thus, $M$ is smooth, and hence an $n$-dimensional catenoid. This finishes the proof of the theorem.
\end{proof}

\bigskip

\section{The moving plane method on manifolds with symmetry}\label{sec_man}
The goal of this section is to prove Theorem \ref{alex_gen_thm}. In fact, our entire discussion so far readily generalizes to Riemannian manifolds which are symmetric with respect a moving plane.
\begin{theorem}\label{main_man_thm}
Consider a smooth Riemannian metric on $N=\mathbb{R}\times N_0$ such that
\begin{enumerate}
\item For every $c$, the metric is invariant under the reflection $F_c$, defined by $(x_1,p)\mapsto (2c - x_1,p)$, and
\item The vectorfield $\partial_{x_1}$ is orthogonal to the foliation $\{x_1=c\}$.
\end{enumerate}
Suppose $M$ is a tame CMC-varifold in $N$ without boundary
and with compact, connected support. Then $M$ is invariant under reflection across some $N_c:=\{x_1=c\}.$ Furthermore, the portion of $M$ on either side of that $N_c$ is a smooth graph over an open subset of $N_c$.
\end{theorem}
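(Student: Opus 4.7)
The plan is to transcribe the moving plane argument from Section 4 into this Riemannian setting, using the isometries $\{F_c\}$ as replacements for Euclidean reflections. The key difference from Theorem \ref{main_boundary_combined} is that no plane of symmetry is specified in advance: we push the moving plane $N_t := \{x_1 = t\}$ down from $+\infty$, and the infimum $t^*$ of levels at which the argument still works must be, by the varifold maximum principle and Hopf lemma, a plane of symmetry of $M$.

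First, since $M$ is compact, I would show that the moving plane can be started from above. Let $b := \max_{x \in M} x_1 < \infty$ and pick a maximizer $p_{\mathrm{top}}$. The orthogonality assumption on $\partial_{x_1}$ makes each $N_c$ totally geodesic, so in exponential coordinates at $p_{\mathrm{top}}$ every tangent cone to $M$ at $p_{\mathrm{top}}$ is a stationary cone supported in the halfspace $\{x_1 \le 0\}$ of $T_{p_{\mathrm{top}}}N$, hence by the convex hull property in its boundary hyperplane. Tameness promotes this cone to a multiplicity-one hyperplane, and Allard's regularity theorem forces $p_{\mathrm{top}}$ to be regular. The same applies at every top point, so for $t$ slightly below $b$ the cap $M \cap \{x_1 \ge t\}$ is a finite disjoint union of smooth graphs over open subsets of $N_t$. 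As in Proposition \ref{interior_lemma}, tameness also yields a compact region $K$ with $\partial K = M$ and a well-defined outward normal $\nu$ on $\reg M$, so the CMC equation has a consistent sign $\mathbf{H} = h\nu$.

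I then define $\mathcal{S}$ to be the set of $t$ satisfying the obvious analogues of conditions (i)--(iv) of Section 4: (i) $M \cap \{x_1 \ge t\}$ is regular with $\partial_{x_1} \cdot \nu > 0$; (ii) each $\partial_{x_1}$-trajectory meets $M_t^+$ at most once; (iii) $F_t(K_t^+) \subseteq K_t^-$; (iv) $F_t(M_t^+) \cap M_t^- = \emptyset$. The previous paragraph shows $(b-\delta, b) \subseteq \mathcal{S}$ for some small $\delta > 0$, and $\mathcal{S}$ is open, so $\mathcal{S} = (t^*, \infty)$. At $t = t^*$ some condition must fail. If the failure is an interior tangency between $F_{t^*}(M)$ and $M$ at a point $q \notin N_{t^*}$, then condition (i) at levels just above $t^*$ makes $F_{t^*}(M)$ smooth near $q$; Theorem \ref{strong_max_varifold_intro} forces local coincidence, and the curving oppositely case is excluded exactly as in Section 4 because $M = \partial K$ provides a consistent sign of $h\nu$. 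Unique continuation along $\reg M$, which is connected by tameness, upgrades this to the global identity $F_{t^*}(M) = M$. If instead the failure occurs at a point $p \in M \cap N_{t^*}$, the classical smooth Hopf lemma rules out $p$ being regular, and applying Theorem \ref{mirror-theorem_intro} to $(M, F_{t^*}(M))$ at $p$ with $\mathbb{H} = \{x_1 < t^*\}$ forces the tangent cone at $p$ to be a hyperplane, contradicting singularity. In both subcases $N_{t^*}$ is a plane of symmetry of $M$, and the graphical structure on each side is inherited from the fact that every $t > t^*$ lies in $\mathcal{S}$.

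The main obstacle is that Theorems \ref{strong_max_varifold_intro} and \ref{mirror-theorem_intro} are stated on $\RR^{n+1}$, whereas here the ambient is Riemannian. However, both are proved by local blow-up arguments: under rescaling the Riemannian metric converges smoothly to the Euclidean metric on $T_pN$ and the isometry $F_{t^*}$ converges to Euclidean reflection across the totally geodesic hyperplane $\partial \mathbb{H}$, so the varifold shrinker analysis and the Bernstein-type Theorems \ref{brendle-stable} and \ref{brendle-connected} apply directly to the rescaled limits. This Riemannian extension of the two ambient-dependent tools is routine but is the one ingredient that needs to be checked in order to transport the Section 4 argument verbatim.
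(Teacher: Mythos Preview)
Your proposal is correct and follows precisely the route the paper takes: the paper's own proof is a two-sentence sketch asserting that Theorems~\ref{strong_max_varifold_intro} and~\ref{mirror-theorem_intro} remain valid on Riemannian manifolds and that one then argues ``precisely as in the proof of Theorem~\ref{main_boundary_combined},'' and your write-up is exactly a fleshed-out version of that sketch, including the adaptation needed to start the moving plane (via compactness and Allard regularity at the top point) in the absence of a prescribed boundary symmetry. One small remark: the total geodesy of each $N_c$ follows most cleanly from its being the fixed-point set of the isometry $F_c$, rather than from the orthogonality hypothesis alone.
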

\begin{proof}
The proof of the Hopf Lemma for varifolds (Theorem \ref{mirror-theorem_intro}) and the strong maximum principle for varifolds (Theorem \ref{strong_max_varifold_intro}) are still valid in the context of any Riemannian manifold. The symmetry assumptions $(i),(ii)$ allows one to argue precisely as in the proof of Theorem \ref{main_boundary_combined}, to show that $\mathcal{S}=(c,\infty)$ for some $c$ such that $M$ is invariant under reflection across $N_c$.    
\end{proof}

Note that if $||\partial_{x_1}||=1$ in  Theorem \ref{main_man_thm}, then assumptions $(i),(ii)$ imply that  $N$  carries a product metric. In general, the metric on $N$ need not split an $\mathbb{R}$-factor, as we shall now exploit:

\begin{lemma}\label{lemma_hyp}
There exists a model to the hyperbolic space of the form $N=\mathbb{R}\times N_0$, satisfying (i) and (ii). 
\end{lemma}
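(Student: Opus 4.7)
The plan is to exhibit the desired model explicitly via the upper half-space description of $\mathbb{H}^{n+1}$. I realize hyperbolic space as $N := \{(y_1, y_2, \ldots, y_{n+1}) \in \mathbb{R}^{n+1} : y_{n+1} > 0\}$ equipped with the standard hyperbolic metric $g = y_{n+1}^{-2}\sum_{i=1}^{n+1} dy_i^2$, and then single out the first coordinate by setting $x_1 := y_1$ and $p := (y_2, \ldots, y_{n+1})$. This gives the desired splitting $N = \mathbb{R}\times N_0$ with $N_0 = \{(y_2, \ldots, y_{n+1}) : y_{n+1} > 0\}$, itself a copy of the upper half-space model of $\mathbb{H}^n$.

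In these split coordinates the metric reads $g = y_{n+1}^{-2}\, dx_1^2 + y_{n+1}^{-2}(dy_2^2 + \cdots + dy_{n+1}^2) = f(p)\, dx_1^2 + g_0(p)$, where $f(p) = y_{n+1}^{-2}$ and $g_0$ are functions of $p$ alone. Two elementary checks then finish the lemma. First, the absence of $dx_1\, dy_i$ cross terms immediately gives that $\partial_{x_1}$ is orthogonal to every leaf $\{x_1 = c\}$, which is (ii). Second, since every coefficient of $g$ is independent of $x_1$, the translation $x_1 \mapsto x_1 + s$ is an isometry, and since $g$ depends on $x_1$ only through $dx_1^2$, the reflection $x_1 \mapsto -x_1$ is also an isometry; composing these yields that $F_c : (x_1, p) \mapsto (2c - x_1, p)$ is an isometry for every $c$, which is (i).

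I do not expect any genuine obstacle here: the content of the lemma amounts to the well-known fact that hyperbolic space admits a foliation by totally geodesic copies of $\mathbb{H}^n$ sharing a common ideal point---the leaves $\{x_1 = c\}$ in the half-space model---together with a nowhere-vanishing Killing field $\partial_{x_1}$ orthogonal to the foliation. Equivalently, one could derive the model intrinsically by fixing any totally geodesic hyperplane $\mathbb{H}^n \subset \mathbb{H}^{n+1}$ and flowing by a parabolic one-parameter subgroup of isometries fixing an ideal point of that hyperplane, but the half-space coordinate description makes the metric and all of its symmetries manifest with essentially no computation.
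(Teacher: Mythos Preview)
Your proof is correct, but it uses a genuinely different splitting of hyperbolic space than the paper does. You take the upper half-space model $\{y_{n+1}>0\}$ and split off a \emph{horizontal} coordinate, so that the $\mathbb{R}$-factor acts by horizontal translations (parabolic isometries) and $N_0$ is the upper half-space model of $\mathbb{H}^n$. The paper instead writes the upper half-space (with $x_1>0$ as the height) in polar form via the diffeomorphism $f:\mathbb{R}\times\mathbb{S}^n_+\to\mathbb{H}^{n+1}$, $f(t,\omega)=e^t\omega$; a short computation gives $f^\ast g = \omega_1^{-2}\bigl(dt^2+g_{\mathbb{S}^n}\bigr)$, which is $t$-independent with $\partial_t$ orthogonal to the slices. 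Here the $\mathbb{R}$-factor acts by dilations (hyperbolic isometries) and $N_0$ is the open hemisphere with a conformal metric. Both models visibly satisfy (i) and (ii), and both have totally geodesic slices, so both feed equally well into the subsequent application. Your construction is arguably more elementary---no change of variables is needed, since the half-space metric is already in the form $f(p)\,dx_1^2+g_0(p)$---while the paper's construction makes the slices compact (hemispheres), which is occasionally convenient but not needed here.
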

\begin{proof}
Consider the upper half space model on $\mathbb{H}^{n+1}$, with the metric $g=\frac{dx_1^2+\ldots+dx_{n+1}^2}{x_1^2}$, and let $N=\mathbb{R}\times \mathbb{S}^n_+$, where $\mathbb{S}^n_+$ is the upper hemisphere of $n$-th sphere. Considering the diffeomorpishm $f:N\rightarrow \mathbb{H}^{n+1}$ defined by 
\begin{equation}
f(t,\omega)=e^t\omega,
\end{equation} 
one easily sees that the metric $f^{\ast}g$ is $t$-independent, and is such that $\partial_t$ is orthogonal to the hypersurfaces $\{t=c\}$.
\end{proof}

\begin{proof}[Proof of Theorem \ref{alex_gen_thm}]
Let us first consider the case $M\subset \mathbb{H}^{n+1}$. Since $M$ is compact, there exists a unique ball of minimal radius $B(p,R)\subset \mathbb{H}^{n+1}$ containing $M$. For each $n$-dimensional hyperplane in $P\subseteq T_p\mathbb{H}^{n+1}$, we have that $\tilde{P}:=\{\mathrm{exp}_p(v),\;v\in P\}$ is a totally geodesic hypersurface in $\mathbb{H}^{n+1}$. Let $\Psi$ be an isometry of $\mathbb{H}^{n+1}$ sending $p$ to some point $q\in N_0$, such that $d\Psi_p(P)=T_{q}N_0$. Since $N_0$ is also totally geodesic, we see that $\Psi$ sends $\tilde{P}$ to $N_0$. Now, Theorem \ref{main_man_thm}, which can be applied thanks to Lemma \ref{lemma_hyp}, implies that there exists some $c\in \mathbb{R}$ such that the $F_c(\Psi(M))=\Psi(M)$, so $M=\Psi^{-1}(F_c(\Psi(M)))$. As the isometry $\Phi:=\Psi^{-1}\circ F_c \circ \Psi$ fixes $M$, it also has to fix the ball $B(p,R)$, and so it fixes $p$. Thus $c=0$.
Note further that $d\Phi_p$ acts on $T_p\mathbb{H}^{n+1}$ by a reflection about $\tilde{P}$.
As reflections generate $\mathrm{SO}(n+1)$, and as isometries of $\mathbb{H}^{n+1}$ fixing $p$ are uniquely determined by their differential at $p$, we see that $M$ is smooth, and is fixed by all of the isometries of $\mathbb{H}^{n+1}$ that fix $p$.  Since  $M$ is connected, we conclude that $M$ is a a geodesic sphere. \\

For $M\subseteq \mathbb{H}^n\times \mathbb{R}$, the same argument shows that there exists some point $(p,x)\in \mathbb{H}^n\times \mathbb{R}$ such that $M$ is invariant under all isometries of the form $(\Psi, Id)$, where $\Psi$ is an isometry of $\mathbb{H}^n$ fixing $p$. Thus, $M$ is rotationally symmetric around $\{p\}\times \mathbb{R}$ and is smooth away from the axis. Arguing as in the proof of Corollary \ref{full_sym_cor} we also get smoothness along the axis of symmetry. 
\end{proof}

\bigskip

\bibliographystyle{alpha}
\bibliography{moving_planes}

\vspace{10mm}

{\sc Robert Haslhofer, Department of Mathematics, University of Toronto,  40 St George Street, Toronto, ON M5S 2E4, Canada}\\

{\sc Or Hershkovits, Institute of Mathematics, Hebrew University, Givat Ram, Jerusalem, 91904, Israel}\\

{\sc Brian White, Department of Mathematics, Stanford University, 450 Serra Mall, Stanford, CA 94305, USA}\\

\end{document}